\documentclass[11pt,reqno]{amsart}

\usepackage{lipsum}
\usepackage{graphicx}
\usepackage{array}
\usepackage{amssymb}
\usepackage{hyperref}
\usepackage{amsthm}
\usepackage{amsfonts}
\usepackage{amsmath}
\usepackage{bm}
\usepackage{mathrsfs}
\usepackage[all]{xy}
\usepackage{color}
\usepackage{subfigure}
\usepackage{tikz, tikz-cd}
\usepackage{enumerate}
\usepackage{anysize}
\usepackage{amscd}
\usepackage[letterpaper]{geometry}
\usepackage{multirow}
\usepackage{array}
\usepackage{booktabs}

\geometry{margin=1in}

\newcommand{\p}{\partial}

\newcommand{\PP}{\mathbb{P}}
\newcommand{\CC}{\mathbb{C}}
\newcommand{\HH}{\mathbb{H}}
\newcommand{\RR}{\mathbb{R}}

\newcommand{\ZZ}{\mathbb{Z}}
\newcommand{\NN}{\mathbb{N}}

\renewcommand{\i}{\sqrt{-1}}

\newcommand{\op}{\overline{\partial}}

\renewcommand{\leq}{\leqslant}
\renewcommand{\geq}{\geqslant}

\renewcommand{\epsilon}{\varepsilon}

\newcommand{\dC}{\mathbb{C}}
\newcommand{\dN}{\mathbb{N}}
\newcommand{\dR}{\mathbb{R}}

\newcommand{\bp}{\overline\partial}

\newcommand{\fM}{\mathfrak{M}}

\DeclareMathOperator{\ALH}{ALH}

\DeclareMathOperator{\Image}{Image}

\DeclareMathOperator{\Nil}{Nil}

\DeclareMathOperator{\I}{I}

\DeclareMathOperator{\Ima}{Im}
\DeclareMathOperator{\Rea}{Re}

\newtheorem{theorem}{Theorem}[section]
\newtheorem{proposition}[theorem]{Proposition}

\newtheorem{lemma}[theorem]{Lemma}

\newtheorem{corollary}[theorem]{Corollary}

\theoremstyle{remark}

\theoremstyle{definition}

\newtheorem{definition}[theorem]{Definition}
\newtheorem{remark}[theorem]{Remark}

\numberwithin{equation}{section}
\numberwithin{figure}{section}
\numberwithin{table}{section}

\begin{document}

\title{Asymptotically Calabi metrics and weak Fano manifolds}

\begin{abstract}
We show that any asymptotically Calabi manifold which is Calabi-Yau can be compactified complex analytically to a weak Fano manifold. Furthermore, the Calabi-Yau structure arises from a generalized Tian-Yau construction on the compactification, and we prove a strong uniqueness theorem. 
We also give an application of this result to the surface case.
\end{abstract}

\author{Hans-Joachim Hein}
\address{Mathematisches Institut, WWU M\"unster, 48149 M\"unster, Germany} 
\email{hhein@uni-muenster.de}

 \author{Song Sun}
\address{Department of Mathematics, University of California, Berkeley, CA 94720} 
\email{sosun@berkeley.edu}

\author{Jeff Viaclovsky}
\address{Department of Mathematics, University of California, Irvine, CA 92697} 
\email{jviaclov@uci.edu}

\author{Ruobing Zhang}
\address{Department of Mathematics, Princeton University, Princeton, NJ 08544}
\email{ruobingz@princeton.edu}

\maketitle

\setcounter{tocdepth}{1}

\section{Introduction}
\thispagestyle{empty}

This paper is concerned with Tian-Yau construction \cite{TianYau} of complete Ricci-flat K\"ahler metrics on the complement of a smooth anti-canonical divisor in a smooth Fano manifold. We begin by describing their geometry at infinity.

\subsection{Asymptotically Calabi metrics}
We begin by defining the asymptotic models for the metrics constructed in \cite{TianYau}.  Let $D$ be an $(n-1)$-dimensional compact K\"ahler manifold with trivial canonical bundle and let $L\rightarrow D$ be an ample line bundle. Define
\begin{align}
\deg(L) = \int_D c_1(L)^{n-1},
\end{align}
and fix a nowhere vanishing holomorphic $(n-1)$-form $\Omega_D$ on $D$ satisfying
\begin{align}
\label{lalala}
\frac{1}{2}\int_{D} (\sqrt{-1})^{(n-1)^2}\Omega_D\wedge\overline{\Omega}_D=(2\pi c_1(L))^{n-1}.
\end{align} 
Using Yau's resolution of the Calabi conjecture \cite{Yau}, there exists a unique Ricci-flat K\"ahler metric $\omega_D\in 2\pi c_1(L)$ with 
\begin{align}
\omega_D^{n-1}=\frac{1}{2}(\sqrt{-1})^{(n-1)^2} \Omega_D\wedge\overline{\Omega}_D. 
\end{align}
There exists a unique hermitian metric $h$ on $L$ whose curvature form is $-\sqrt{-1}\omega_D$, up to scaling. Fixing a choice of $h$, the {\emph{Calabi model space}} is the subset $\mathcal C$ of $L$ consisting of all elements $\xi$ with $0<|\xi|_h < 1$. We next define
a nowhere vanishing holomorphic volume form $\Omega_\mathcal C$ and a Ricci-flat K\"ahler metric $\omega_\mathcal C$ which is incomplete as $|\xi|_h \to 1$ and complete as $|\xi|_h \to 0$. Let $p: \mathcal C\rightarrow D$ denote the bundle projection and  $Z$ be the holomorphic vector field generating the natural $\dC^*$-action on the fibers of $p$.
The holomorphic volume form $\Omega_{\mathcal C}$ is uniquely determined by the equation 
 \begin{align}
Z \lrcorner\ \Omega_{\mathcal C}=p^*\Omega_D,
\end{align}
and the metric  $\omega_{\mathcal{C}}$ is given by the \emph{Calabi ansatz}
 \begin{align}\label{calabiansatz}\omega_{\mathcal C}=\frac{n}{n+1}  \sqrt{-1}\p\bp (-{\log |\xi|_h^2})^{\frac{n+1}{n}},
\end{align}
which satisfies the complex Monge-Amp\`ere equation
 \begin{equation*}\omega_{\mathcal C}^n=\frac{1}{2}(\sqrt{-1})^{n^2} \Omega_{\mathcal C}\wedge\overline\Omega_{\mathcal C},\end{equation*}
hence is Ricci-flat. The function $z=(-{\log |\xi|_h^2})^{1/n}$ is the $\omega_{\mathcal{C}}$-moment map for the natural $S^1$-action on $L$. It is easily verified that the $\omega_{\mathcal{C}}$-distance function $r$ to a fixed point in $\mathcal C$ satisfies
\begin{align}
\label{e:distz}
r^{-1} z^{\frac{n+1}{2}} = C + o(1), 
\end{align}
as $z \to \infty$. 

\begin{definition}\label{d:AC}
 A structure $(X,I,\omega, \Omega)$ is asymptotically Calabi if
there exists $\underline{\delta} > 0$, a Calabi model space $\mathcal{C}$, and a diffeomorphism $\Phi: \mathcal C\setminus K'\rightarrow X\setminus K$, where $K \subset X$ is compact and $K' = \{|\xi|_h \geq \frac{1}{2}\}$, such that the following hold uniformly as $z\to+\infty$:
\begin{align}
\label{AC1}
|\nabla_{g_{\mathcal C}}^k(\Phi^*I_{X}-I_{\mathcal C})|_{g_\mathcal C} &=O(e^{-\underline{\delta} z^{n/2}}) \\
\label{AC2}
|\nabla_{g_{\mathcal{C}}}^k(\Phi^*\Omega_X-\Omega_{\mathcal C})|_{g_{\mathcal C}}&=O(e^{-\underline{\delta} z^{n/2}}) \\
\label{AC3}
|\nabla_{g_\mathcal C}^k(\Phi^*\omega-\omega_{\mathcal C})|_{g_{\mathcal C}}&=O(e^{-\underline{\delta} z^{n/2}}).
\end{align} 
for all $k \in \mathbb{N}_0$. 
\end{definition}

Our main theorem is the following:
\begin{theorem} 
\label{t:main1}
Any asymptotically Calabi structure $(X,I, \omega, \Omega)$ which is Calabi-Yau can be compactified complex analytically to a weak Fano manifold $\overline{X}$.  Furthermore, the Calabi-Yau structure arises from a generalized Tian-Yau construction on $\overline{X}$ and $\omega$ is the unique Calabi-Yau metric with respect to $(I, \Omega)$ satisfying \eqref{AC3} and representing $[\omega] \in H^2(X)$. 
\end{theorem}
The above theorem solves a particular case of Yau's conjecture in \cite{YauICM}.
We give a precise description of this generalized Tian-Yau construction in Section~\ref{s:dP}.
The main step in the proof is to show that for a certain choice of a parallel complex structure, $I$, the underlying complex manifold of an Calabi-Yau asymptotically Calabi metric  can be compactified to a weak Fano manifold. This involves producing holomorphic functions with controlled growth at infinity, which is typically done using weighted Fredholm theory. This strategy runs into trouble due to the fact that, in the Tian-Yau construction, the decay rate of the metric is much slower than the decay rate of the complex structure. To overcome this difficulty, our strategy is based on the $L^2$-estimates in several complex variables pioneered by H\"ormander \cite{Hormander}.  
The proof of Theorem \ref{t:main1} can be found in Section \ref{s:dP}.
We also have the following corollary. Recall the index of $\overline{X}$ is the largest integer $k$ such that $K_X^{-1} = H^{k}$ for some line bundle $H$. 
\begin{corollary}
\label{c:main1} Let $(X^n,I,\omega, \Omega)$ be
an asymptotically Calabi manifold which is Calabi-Yau. Then $\pi_1(X)$ is a cyclic group with order the index of $\overline{X}$.  Furthermore, there exists a constant $C(n)$, depending only upon $n$, such that $\deg(L) \leq C(n)$. 
\end{corollary}
This degree bound is remarkable because in Definition~\ref{d:AC}, $\deg(L)$ could a priori be any integer, but if $\mathcal{C}$ occurs at infinity for a Calabi-Yau asymptotically Calabi metric, then $\deg(L)$ must be bounded. The proof of this degree bound uses Theorem~\ref{t:main1}
and deep results in birational geometry.

\subsection{The surface case}
A gravitational instanton is by definition a complete noncompact hyperk\"ahler $4$-manifold  $(X,g, \bm{\omega})$ with square-integrable curvature. By the results of the recent paper \cite{SZ21}, a gravitational instanton is always asymptotic to a model end. Accordingly, gravitational instantons can be classified into 6 families: ALE, ALF, ALG, ALH, ALG$^*$, ALH$^*$. There has been extensive work, much of it quite recent, on classifying the 6 families completely \cite{CCI, CCII, CCIII, CVII, KronII, MinTor}. This results in this paper are relevant to the ALH$^*$ family. This has the unique intriguing feature that its members have fractional asymptotic volume growth; indeed, the volume growth exponent of an ALH$^*$ model end is $\frac{4}{3}$. Gravitational instantons of type $\ALH^*$ also appear as singularity models in polarized degenerations of K3 surfaces \cite{HSVZ, SZ}. Their precise definition can be found in Section~\ref{s:ALHstar}. 

There are two known mathematical constructions of ALH$^*$ gravitational instantons. Both of them come with a preferred choice of complex structure, and are based on solving a complex Monge-Amp\`ere equation on a quasiprojective surface with trivial canonical bundle. First, we have the Tian-Yau construction \cite{TianYau},  which involves the complement of a smooth anticanonical divisor in a del Pezzo surface. Second, we have the construction of \cite{Hein}, which involves the complement of a singular fiber of Kodaira type $\I_b$ in a rational elliptic surface. Our next theorem relates the complex structures involved in these two constructions. 

\begin{theorem}
\label{t:main2}  
Let $(X,g,\bm{\omega})$ be an $\ALH^*$ gravitational instanton.

\begin{enumerate}[(i)]
\item Letting $I$ denote the complex structure corresonding to $\omega_1$, then $(X,I)$ is biholomorphic to weak del Pezzo surface $\overline{X}$ minus a smooth anticanonical elliptic curve. Furthermore, the hyperk\"ahler structure arises from a generalized Tian-Yau construction on this compactification and is the unique Calabi-Yau metric with respect to $(I, \Omega = \omega_2 + \sqrt{-1} \omega_3)$ satisfying \eqref{AC3} and representing $[\omega_1] \in H^2(X)$.

\item Letting $J$ denote the complex structure corresponding to $I_2$, then
$(X,J)$ compactifies to a rational elliptic surface $S$ with a global section by adding $F$, a Kodaira type $\I_b$ fiber of multiplicity~$1$. The $2$-form $\Omega=\omega_2+ \i\omega_3$ is a rational 2-form on $S$ with a simple pole along~$F$.
\end{enumerate}
\end{theorem}

The proof of Theorem~\ref{t:main2} will be given in Section~\ref{s:ALHstar} and is our 
originally intended proof of some claims made in \cite[Remark 2.4]{HSVZ}. The main ingredients in the proof are Theorem \ref{t:main1}, the decay estimates of \cite[Section~3]{HSVZ}, and the analysis of harmonic functions on asymptotically Calabi spaces of \cite[Section~4]{HSVZ}. We also note that in the meantime Collins-Jacob-Lin have proved, using an entirely different method, that a Tian-Yau space can be compactified to a rational elliptic surface; see \cite[Theorem 1.3]{CJL}.

\subsection{Acknowledgements.} The authors would like to thank Gao Chen for valuable discussions on gravitational instantons. HJH was supported by NSF grant DMS-1745517 and by the DFG under Germany's Excellence Strategy EXC 2044-390685587 ``Mathematics M\"unster:~Dynamics-Geometry-Structure'' as well as by the CRC 1442 ``Geometry:~Deformations and Rigidity'' of the DFG. SS was supported by the Simons Collaboration on Special Holonomy in Geometry, Analysis and Physics ({\#}488633), and NSF grant DMS-2004261.  JV was partially supported by NSF Grants DMS-1811096 and DMS-2105478.
RZ was partially supported by NSF Grants DMS-1906265 and DMS-2304818.

\section{Compactification to weak Fano manifold}
\label{s:dP}

In this section, we give the proof of Theorem \ref{t:main1}. 
Let $(X, I, \omega, \Omega)$ be an asympotically Calabi Calabi-Yau manifold. We identify $X\setminus K$ smoothly with a Calabi model space $\mathcal C \setminus K'$, where $K' = \{z \leq z_0\}$ for some $z_0 \gg 0$, and assume that \eqref{AC1}--\eqref{AC3} are satisfied. Let $\phi_0\equiv z^n-\delta z^{n/2}$ for some $\delta\in (0,  1/2)$ to be chosen later. 
Then 
\begin{align}
\begin{split}
dd^c_{\mathcal C}\phi_0 &= - d J_{\mathcal{C}} d \phi_0 \\
&= - d \Big( \big(nz^{n-1} - (n/2)\delta z^{n/2 - 1}\big) J_{\mathcal{C}} dz \Big)\\
&= d \Big( ( n - (n/2) \delta z^{-n/2}) \theta \Big)\\
&= (n - (n/2)\delta z^{-n/2}) d \theta + \delta (n/2)^2 z^{-n/2 - 1} dz \wedge \theta,
\end{split}
\end{align}
where $\theta \equiv - z^{n-1} J_{\mathcal{C}} dz$. Note also that
\begin{align}
\begin{split}
\omega_D = \sqrt{-1} \partial \overline{\partial} ( - \log \Vert \xi \Vert_h^2) 
= \sqrt{-1} \partial \overline{\partial} ( z^n)  = n d \theta,
\end{split}
\end{align}
and
\begin{align}
\begin{split}
\omega_{\mathcal{C}} &= \frac{n}{n+1} \frac{1}{2} d d^c_{\mathcal{C}} z^{n+1} = - \frac{n}{2(n+1)}   d J_{\mathcal{C}} d z^{n+1}\\
& = - \frac{n}{2} d (z^{n} J dz) = \frac{n}{2} d (z \theta) = \frac{n}{2} ( z d \theta + dz \wedge \theta).
\end{split}
\end{align}
Then for $z_1 \gg z_0$, 
\begin{align}
\begin{split}
d d_{\mathcal{C}} \phi_0 & = z^{-n/2-1} \Big(   ( nz^{n/2+1} - (n/2)\delta z )d \theta +
 \delta (n/2)^2 dz \wedge \theta \Big)\\
& =  z^{-n/2-1} \Big(   ( nz^{n/2+1} - (n/2)\delta z - (n/2)^2 \delta z  )d \theta  
 +  \delta (n/2)^2 (z d \theta + dz \wedge \theta) \Big)\\
& \geq z^{-n/2-1} (n/2) \delta \omega_{\mathcal{C}},
\end{split}
\end{align}
since $n d \theta = \omega_D$ is positive definite on $D$.  
Let $A=z_1^n-\delta z_1^{n/2}$, and choose a smooth increasing and convex function $u: \dR\rightarrow \dR$ such that $u(t)=2A/3$ for $t\leq A/2$ and $u(t)=t$ for $t\geq A$. Denote $\phi_1 \equiv u \circ \phi_0$. Then 
\begin{align}
d d^c_{\mathcal{C}} \phi_1 &= - d J_{\mathcal{C}} d( u \circ \phi_0) = -d J_{\mathcal{C}} u'(\phi_0) d \phi_0 = - u''(\phi_0) d \phi_0 \wedge J_{\mathcal{C}} d\phi_0 
+ u'(\phi_0) d d^c_{\mathcal{C}} \phi_0.
\end{align}
Since the form $-  d \phi_0 \wedge J_{\mathcal{C}} d\phi_0$ is positive semi-definite, we see that $dd^c_{\mathcal C} \phi_1\geq 0$ for all $z\geq z_0$ and
$dd^c_{\mathcal C}\phi_1\geq C\delta z^{-n/2-1}\omega_{\mathcal C}$  for $z>z_1$.

Notice that $\phi_1$ can be naturally viewed as a smooth function on $X$, and it satisfies $dd_I^c\phi_1\geq \Phi(z)\omega$ for a nonnegative function $\Phi(z)$ with $\Phi(z)\geq C\delta z^{-n/2-1}$ when $z$ is large. 
In particular, we know that $(X, I)$ is 1-convex. So, by \cite[Section 2]{Grauert}, there is a Remmert reduction $\pi: X\rightarrow \widetilde X$, where $\widetilde X$ is Stein and $\text{Sing}(\widetilde X)$ is a finite set contained in the region $\{z\leq z_1\}$. Then $\widetilde X$ admits an exhaustion function $\psi_{\widetilde X}$ which is smooth on $\widetilde X\setminus \text{Sing}(\widetilde X)$ and satisfies $dd^c_I\psi_{\widetilde X}>0$.  Denote $\psi_X \equiv \pi^*\psi_{\widetilde X}$. Then $dd^c_I\psi_X>0$ on $X\setminus E$, where $E\equiv\pi^{-1}(\text{Sing}(\widetilde X))$. 
Choose a cutoff function $\chi$ on $X$ supported in $\{z\leq z_1+1\}$ with $\chi\equiv 1$ on $\{z\leq z_1\}$. Then let 
$\phi \equiv \epsilon\chi\psi_X+\phi_1$ for a fixed $0<\epsilon\ll1$. The above calculation shows that $\phi$ also satisfies  $dd^c_I\phi\geq \Phi(z)\omega$, where $\Phi(z)\geq 0$ on $X$, $\Phi(z)>0$ on $X\setminus E$, and  $\Phi(z)\geq C \delta z^{-n/2-1}$ outside a compact set.  

Any holomorphic section $s\in H^0(D, L^k)$ gives rise to a holomorphic function $f_s$ on $L\setminus {\bf 0}_L$ by defining  
\begin{align}
\label{fsdef}
f_s(\xi) = s(\pi(\xi))/ \xi^{\otimes^k},
\end{align} 
where $\xi \in L\setminus {\bf 0}_L$, and $\pi : L \rightarrow D$ is the bundle projection. In particular,  $f_s$ restricts to a holomorphic function on $\mathcal C$. Taking the logarithm of \eqref{fsdef}, and using that $z^n = - 2 \log | \xi|_h$, we see that
$f_s = \tilde{f}_s e^{\frac{k}{2}z^n}$, where $\tilde{f}_s$ is a function on the unit circle bundle of $L$. 
\begin{lemma}
\label{l:elliptic}
 We have  $|\nabla_{g_{\mathcal{C}}}^lf_s|_{g_{\mathcal{C}}}=O(e^{\frac{k}{2}z^n + \epsilon z^{n/2}})$ for all $l\geq0$ and any $\epsilon > 0$, as $z \to \infty$. 
\end{lemma}
\begin{proof}
The estimate for $l = 0$ holds with $\epsilon = 0$, and follows from the previous remarks. Since $f_s$ is holomorphic, it is harmonic. The curvature of the Calabi metric is in particular bounded at infinity; see \cite[Lemma~4.3]{TianYau} and \cite[Remark~3.2]{HSVZ}. Given $p \in \mathcal{C}$, let $B_1(p)$ be a unit ball around $p$, and $B_1(\tilde{p})$ be a unit ball in its universal cover. Then standard elliptic estimates for harmonic functions yield that
\begin{align}
|\nabla^l f_s|(p) \leq \Vert \nabla^l f_s \Vert_{C^0(B_{1/2}(\tilde{p}))} \leq C_l \Vert f_s \Vert_{C^0(B_{1}(\tilde{p}))}.
\end{align}
Using \eqref{e:distz}, one can easily check that if $|r(q) - r(p)| < 1$, then 
\begin{align}
|z(q)^n - z(p)^n| \leq C z(p)^{(n-1)/2}.
\end{align} 
Since $(n-1)/2 < n/2$, the claim follows. 
\end{proof}
Denote by $\mathcal O(X)$ the space of $I$-holomorphic functions on $X$.

\begin{proposition}
\label{p:L2 estimate}
There is an injective linear map $\mathcal L: \bigoplus_{k=0}^\infty H^0(D, L^k)\rightarrow \mathcal O(X)$ such that for any nonzero section $s\in H^0(D, L^k)$, we have that $|\nabla_g^l(\mathcal L(s)-f_s)|_{g}=O(e^{\frac{k}{2}z^n-\frac{\delta_k}{2}z^{n/2}})$ for all $l\geq 0$ and for some $\delta_k>0$.
\end{proposition}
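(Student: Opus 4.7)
The plan is to construct $\mathcal L(s)$ as a H\"ormander $L^2$-correction of the model function $f_s$: extend $f_s$ by a cutoff, then subtract the minimal $L^2$-solution of a $\bar\partial_I$-equation against the plurisubharmonic weight $k\phi$ just built to make it genuinely $I$-holomorphic. Concretely, I would fix a smooth cutoff $\chi_0 \colon X \to [0,1]$ with $\chi_0 \equiv 0$ on $\{z \le z_1+1\}$ and $\chi_0 \equiv 1$ on $\{z \ge z_1+2\}$ and set $\tilde f_s \equiv \chi_0 f_s$, extended by zero across the compact region (in particular across the Remmert exceptional set $E$). The error form $\alpha_s \equiv \bar\partial_I \tilde f_s$ is then supported in $\{z \ge z_1+1\} \subset X \setminus E$; on the compact collar where $d\chi_0 \ne 0$ it is $O(e^{kz^2/2})$, while on $\{z \ge z_1+2\}$ it equals $(\bar\partial_I - \bar\partial_{I_{\mathcal C}}) f_s$ and so, combining $|I-I_{\mathcal C}|_g = O(e^{-\underline\delta z})$ with $|\nabla^l f_s|_g = O(e^{kz^2/2})$, satisfies $|\alpha_s|_g = O(e^{kz^2/2 - \underline\delta z})$.

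Next I would apply H\"ormander's $L^2$-existence theorem for $\bar\partial_I$ on the complete K\"ahler manifold $(X,I,\omega)$ with weight $k\phi$, choosing $\delta \in (0,1/2)$ small enough that $k\delta < 2\underline\delta$. Since $dd^c_I(k\phi) \ge k\Phi(z)\omega$ with $\Phi(z) \gtrsim z^{-1}$ for $z$ large, and $\alpha_s$ is supported in the strictly plurisubharmonic region $X\setminus E$, the driving integral estimates to
\begin{equation*}
\int_X |\alpha_s|^2_g\,(k\Phi(z))^{-1}\,e^{-k\phi}\,dV_g \;\lesssim\; \int_{z_1}^\infty e^{(k\delta - 2\underline\delta)z}\,z^2\,dz < \infty,
\end{equation*}
where the $z^2$ factor absorbs $\Phi^{-1}$ and the polynomial transverse volume, and I have used $\phi \sim z^2 - \delta z$ at infinity. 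H\"ormander then yields the minimal $L^2$-solution $u_s$ of $\bar\partial_I u_s = \alpha_s$ with a matching weighted $L^2$-bound, and orthogonal-projection minimality makes $s \mapsto u_s$ linear on each $H^0(D,L^k)$.

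I would then convert the weighted $L^2$ control to pointwise decay. At $p$ with $z(p)$ large, $e^{-k\phi} \gtrsim e^{-kz(p)^2 + k\delta z(p)}$ on a unit $g$-ball $B_1(p)$, so $\|u_s\|_{L^2(B_1(p))}^2 \lesssim e^{kz(p)^2 - k\delta z(p)}$. Using the bounded geometry of $g$ at infinity, I split $u_s = v_s + w_s$ on $B_1(p)$ with $w_s$ an $L^\infty$-controlled local $\bar\partial$-primitive of $\alpha_s$ (so $|w_s| \lesssim e^{kz(p)^2/2 - \underline\delta z(p)}$) and $v_s$ $I$-holomorphic; the sub-mean-value inequality for $|v_s|^2$ then gives $|u_s(p)| \lesssim e^{kz(p)^2/2 - \delta_k z(p)/2}$ with $\delta_k \equiv \min(k\delta,\,2\underline\delta) > 0$. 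Setting $\mathcal L(s) \equiv \tilde f_s - u_s$ produces the required smooth linear $I$-holomorphic map with the asserted asymptotics; higher derivatives follow by iterating local elliptic regularity on $\bar\partial_I(\mathcal L(s) - f_s) = -(\bar\partial_I - \bar\partial_{I_{\mathcal C}}) f_s$, and injectivity is immediate because $|f_s(x,\xi)| = |s(x)|_{h_L^k}\,e^{kz^2/2}$ prevents a nonzero $f_s$ from decaying faster than $e^{kz^2/2}$.

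The main technical hurdle will be the degeneration of H\"ormander's bounding inequality along the finite exceptional set $E$, where $\phi$ is only weakly plurisubharmonic. The cutoff above is placed precisely so that $\alpha_s$ vanishes in a neighborhood of $E$, which allows closing the gap by approximating $\phi$ by strictly plurisubharmonic smooth weights and passing to the limit, or equivalently by using a version of H\"ormander's theorem that permits the bounding function to vanish away from the support of the datum. A minor bookkeeping point is that $\delta$ must be chosen as a function of $k$ so that $k\delta < 2\underline\delta$; this yields a positive exponent $\delta_k$ for each individual $k$, which is all the statement requires.
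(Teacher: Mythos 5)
Your proposal is essentially the paper's own argument: cut off $f_s$, solve $\overline{\partial}_I u=\overline{\partial}_I(\chi f_s)$ by weighted $L^2$ estimates with the weight $k\phi$ built from $\phi_0=z^2-\delta z$ (choosing $\delta$ small relative to $\underline{\delta}/k$ so the driving integral converges), upgrade the weighted $L^2$ bound to pointwise and derivative decay by local elliptic theory, and set $\mathcal L(s)=\chi f_s-u$, with linearity from minimality of the solution and injectivity from the non-decay of $f_s$. The one point where you genuinely deviate is the treatment of the exceptional set $E$ of the Remmert reduction: the paper solves the equation on $X\setminus E\cong \widetilde X\setminus \mathrm{Sing}(\widetilde X)$, which carries a complete K\"ahler metric by Ohsawa's result, and then extends $\mathcal L(s)$ across $E$ via Hartogs on the Stein space $\widetilde X$; you instead solve on all of $X$ (where $g$ itself is complete) with the merely semipositive weight, using that your cutoff makes the datum vanish near $E$. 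That variant does go through, but only by the second of your two proposed devices: the $L^2$ existence theorem for semipositive curvature in which one only requires $\int\langle A^{-1}\alpha_s,\alpha_s\rangle e^{-k\phi}\,dV_g<\infty$ (this is precisely the statement in Demailly that the paper cites; you should also record, as the paper does, that the canonical-bundle/Ricci contribution is harmless here because $K_X$ is trivial, or equivalently because $\omega$ is Ricci-flat). Your first device, approximating $\phi$ by globally strictly plurisubharmonic weights, is not available: if nonempty, $E$ is a union of compact holomorphic curves, and no neighborhood of a compact curve admits a strictly plurisubharmonic function by the maximum principle -- this obstruction is exactly why the paper passes to $X\setminus E$ and invokes Hartogs. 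Dropping that alternative and citing the degenerate-curvature version (or reverting to the paper's route through $\widetilde X$), your argument is complete and matches the paper's in all essentials.
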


\begin{proof}
First fix a cutoff function $\chi$ on $\mathcal C$ which is equal to $1$ when $z\geq z_1+1$ and  vanishes when $z\leq z_1$. Then for any $s\in H^0(D, L^k)$, the function $\chi f_s$ naturally extends to a smooth function on $X$. Notice that for $z>z_0+1$  we have that
\begin{align}
|\bp_I(\chi f_s)|_{g}=|\bp_If_s|_{g}=|(\bp_I-\bp_{\mathcal C})f_s|_{g}\leq e^{\frac{k}{2}z^n-\underline{\delta}z^{n/2}}. 
\end{align}
Set $\delta\equiv\min(\frac{\underline{\delta}}{2k}, \frac{1}{2})$ in the definition of $\phi_0$ above. 
Then we have that
\begin{align}
\int_{X\setminus E} \frac{1}{\Phi(z)}|\bp_I(\chi f_s)|_g^2e^{-k\phi}\,d{\rm Vol}_g<\infty.
\end{align}
Notice that $X\setminus E\cong \widetilde X\setminus \text{Sing}(\widetilde{X})$ admits a complete K\"ahler metric (see \cite[Proposition 4.1]{Ohsawa}). Also, by assumption $K_X$ is trivial, so we can apply the standard $L^2$-estimates for the $\overline\partial$-operator on $X\setminus E$ (see for example \cite[Chapter VIII.6, Theorem 6.1]{Demailly}) to find a unique solution $u$ to the equation  $\bp_Iu=\bp_I(\chi  f_s)$ with 
\begin{align}
\int_{X\setminus E}|u|^2e^{-k\phi}\,d{\rm Vol}_g\leq\int_{X\setminus E} \frac{1}{\Phi(z)}|\bp_I(\chi f_s)|_g^2e^{-k\phi}\,d{\rm Vol}_g
\end{align} such that $u$ is $L^2$ orthogonal to ${\rm ker}(\bp_I)$. Notice that $\Delta_gu=\bp_{I,g}^*\bp_I(\chi f_s)=O(e^{(k/2)z^n-(\underline{\delta}/2)z^{n/2}})$. Similar to the proof of Lemma~\ref{l:elliptic}, it follows from local $L^2$ elliptic estimates that $|\nabla^l_gu|_g=O(e^{\frac{k}{2}z^n- \delta z^{n/2}})$ for all $l\geq0$.   Now let $\mathcal L(s)\equiv\chi f_s-u$. This function is holomorphic away from $E$, so by Hartogs's theorem (applied to $\widetilde{X}$) one can see that it is globally holomorphic on $X$. The conclusion then follows.
\end{proof}

Fix $k$ such that $L^l|_D$ is very ample for all $l\geq k$. Then we have a holomorphic embedding $F_k:L \rightarrow\mathbb P(H^0(D, L^k)^*\oplus H^0(D, L^{k+1})^*)$ defined by 
\begin{align}
(x, \xi)\in L\mapsto  ({\rm ev}_{x,k} / \xi^{\otimes k}, {\rm ev}_{x,k+1} / \xi^{\otimes(k+1)}),\end{align}
where $x\in D$, $\xi\in L_x$, and ${\rm ev}_{x, l}:H^0(D, L^l)\rightarrow L^l|_{x}$ is the evaluation map. Alternatively, we can describe $F_k$ as follows. By assumption, we have the embeddings 
\begin{align}
i_{L^k} : D \rightarrow  \mathbb P(H^0(D, L^{k})^*), \quad i_{L^{k+1}} : D \rightarrow \mathbb P(H^0(D, L^{k+1})^*).
\end{align}
We can view $i_{L^k}$ and $i_{L^{k+1}}$ as mapping into $\mathbb P(H^0(D, L^k)^*\oplus H^0(D, L^{k+1})^*)$. Then for $x \in D$, $F_k$ maps the fiber $\pi^{-1}(x) \subset L$ linearly to the line between $i_{L_k}( \pi(p))$
and $i_{L^{k+1}}(\pi(p))$, so it is clearly an embedding. Obviously,  $F_k({\bf 0}_L)$ is isomorphic to $D$ and is contained in the linear subspace $\mathbb P(H^0(D, L^{k+1})^*) \subset  \mathbb P(H^0(D, L^k)^*\oplus H^0(D, L^{k+1})^*)$.

Now we define a holomorphic map $G_k: X\rightarrow \mathbb P(H^0(D, L^k)^*\oplus H^0(D, L^{k+1})^*)$ via
\begin{align}
p\in X\mapsto (\widetilde{{\rm ev}}_{p, k}, \widetilde{{\rm ev}}_{p, k+1}),
\end{align}
where $\widetilde{{\rm ev}}_{p, k}: H^0(D, L^k)\rightarrow \mathbb C$ is given by $\widetilde{{\rm ev}}_{p, k}(s)=\mathcal L(s)(p)$. 

We denote by $\overline{X}$ the topological compactification of $X$ by adding $F_k(D)$ to the end of $G_k(X)$. This is justified by the following.
\begin{proposition}
\label{p:2-3}
There exists a compact set $K \subset X$ such that $G_k$ is a holomorphic embedding on $X\setminus K$ with $G_k (X \setminus K) \cap F_k(D) = \emptyset$.  
Furthermore there is a neighborhood of $F_k(D)$ in $\overline{X}$
which is homeomorphic to a neighborhood of $F_k({\mathbf{0}}_L)$ in $F_k(L)$.  
\end{proposition}

\begin{proof}
The key point is that we can compare $G_k$ with $F_k$ via the fixed embedding of the end of $X$ into $\mathcal C\subset L$. Given any point $q_0\in {\bf 0}_L\cong D$, we can find sections $s_0, \dots, s_{n-1}\in H^0(D, L^k)$ and $s_n\in H^0(D, L^{k+1})$ such that $s_0(q_0)\neq 0$, $s_1(q_0) = \cdots = s_{n-1}(q_0) = 0$, $ds_1(q_0), \dots, ds_{n-1}(q_0)$ are linearly independent, and $s_n(q_0)\neq0$. Then $w_k\equiv s_k/s_0$ for $1 \leq k \leq n-1$ and $w_n\equiv s_0/s_n$ form local holomorphic coordinates in a neighborhood $U$ of $q_0$ in $L$. To clarify this definition, note that $w_n$ is a local section of $L^{-1}$ on $D$, but by duality we can view such a section as a local function on the total space of $L$ which is linear on fibers. We therefore can think of $w_1, \dots, w_{n-1}$ as coordinates on the divisor, and $w_n$ as a fiber coordinate. Denote this coordinate system by $w : U \rightarrow \CC^n$. Notice that $|\xi|^2_{h_L}=|w_n|^2e^{-\varphi(w_1, \dots, w_{n-1})}$ for a smooth function $\varphi$, satisfying $\sqrt{-1} \partial \overline{\partial} \varphi = \omega_D$. The Calabi metric in the $(w_1,\dots, w_n)$ coordinates is given by
 \begin{align}
\label{Cw1w2}
 \omega_\mathcal{C}=(-\log |\xi|^2_{h_L})^{\frac{1}{n}}\omega_D+\frac{1}{n}(-\log |\xi|_{h_L}^2)^{\frac{1}{n}-1}\cdot \sqrt{-1} \cdot \Big(\frac{dw_n}{w_n}-\p\varphi\Big)\wedge \Big(\frac{d\bar w_n}{\bar w_n}-\bp\varphi\Big).
 \end{align}

We define a mapping $\Psi$ from an open subset $V \subset P(H^0(D, L^k)^*\oplus H^0(D, L^{k+1})^*)$ containing ${F}_k(q_0)$ to $\CC^n$ by
\begin{align}
[ (\alpha, \beta)] \mapsto \Big( \frac{\alpha(s_1)}{\alpha(s_0)}, \dots,  \frac{\alpha(s_{n-1})}{\alpha(s_0)}, \frac{\alpha(s_0)}{\beta(s_n)} \Big).
\end{align}
Then the restriction of $\Psi$ to $\Image(F_k)$ is a coordinate chart for $\Image(F_k)$ near ${F}_k(q_0)$, and $\Psi \circ F_k = w$ in a neighborhood of $q_0 \in {\bf{0}}_L$.
Next, let $\eta_j=\mathcal L(s_j)/\mathcal L(s_0)$ for $1 \leq j \leq n-1$, and $\eta_n=\mathcal L(s_0)/\mathcal L(s_n)$, and we denote this mapping $\eta: \Phi(U \setminus {\bf{0}}_L) \rightarrow \CC^n$ (after possibly shrinking $U$). Note that for any $p$ in the domain of $\eta$, we have
\begin{align}
\begin{split}
\Psi ( G_k(p)) &= \Psi ( [ \tilde{ev}_{p,k}, \tilde{ev}_{p,k+1}])\\
&= \Big( \frac{  \tilde{ev}_{p,k}(s_1)}{ \tilde{ev}_{p,k}(s_0)}, 
\dots,  \frac{\tilde{ev}_{p,k}(s_0)}{ \tilde{ev}_{p,k+1}(s_n)} \Big)\\
&= \Big( \frac{ \mathcal{L}(s_1)}{\mathcal{L}(s_0)} (p), \dots, 
\frac{ \mathcal{L}(s_0)}{\mathcal{L}(s_n)} (p) \Big) = \eta(p).
\end{split}
\end{align}

By Proposition \ref{p:L2 estimate}, on $U\setminus {\bf{0}}_L$ we have that $\eta \circ \Phi = w (1+\zeta)$ with $|\nabla_g^l\zeta|_g=O(e^{-\delta' z^{n/2}})$ for all $l\geq 0$ for some $\delta' > 0$. In the following, the constant $\delta'$ is allowed to change from line to line. Using \eqref{Cw1w2}, we have that
\begin{align}
\Big( \frac{\partial (\eta\circ\Phi)_\alpha}{\partial w_\beta} \Big)
=
\left(
\begin{matrix} 
   &   & &   \vdots \\
& \Large{\mathbb{I}_{n-1}} + O(e^{-\delta' z^{n/2}})  &  &  O(e^{- \delta' z^{n/2}} |w_n|^{-1}) \\
& & &   \vdots \\ 
\cdots & O(e^{- \delta' z^{n/2}} |w_n|)  & \cdots  & 1 + O(e^{-\delta' z^{n/2}}) \\
\end{matrix}
\right)
\end{align}
Thus, the Jacobian matrix $(\frac{\p (\eta \circ \Phi)_\alpha}{\p w_\beta})$ is nondegenerate for $z\gg 1$, which implies that $G_k$ is an immersion outside a compact set. 

Next, we show that $G_k$ is injective onto its image for $z\gg1$. Suppose we have two points $p_1, p_2\in X$ with $G_k(p_1)=G_k(p_2)$, and $ z(q_2)\geq z(q_1)\gg1$, where $\Phi(q_j) = p_j$ . Let $d_{FS}$ denote the Fubini-Study distance on $\mathbb P(H^0(D, L^k)^*\oplus H^0(D, L^{k+1})^*)$. We claim that there exists $\epsilon >0$ such 
that $d_{FS}(F_k(q), G_k(\Phi(q)) ) < \epsilon$ if $q \in L$ is sufficiently near $D$. 
To see this, using the formula
\begin{align}
d_{FS} (Z,W) = \arccos \sqrt{  \frac{ \langle Z, W \rangle^2}{ \Vert Z \Vert^2 \Vert W \Vert^2 } },
\end{align}
and Proposition \ref{p:L2 estimate}, we obtain that 
\begin{align}
d_{FS}( F_k(q), G_k (\Phi(q))) = \arccos \big(1 + O(e^{- \delta' z(q)^{n/2}})\big),
\end{align}
 as $z \to \infty$, since $L^{k+1}$ is very ample. Then 
\begin{align}
d_{FS}( F_k( q_1), F_k(q_2)) 
\leq d_{FS}( F_k(q_1), G_k(p_1)) + d_{FS} (G_k(p_2), F_k(q_2)) < 2 \epsilon. 
\end{align} 
This implies that $ q_1$ and $q_2$ must be contained in the same $w$ coordinate patch above, and therefore $\eta_\alpha(p_1)=\eta_\alpha(p_2)$ for $\alpha=1, \dots, n$. Denote $\tau_\alpha \equiv |w_\alpha(q_2)-w_\alpha(q_1)|$. Then $\tau_\alpha=w_\alpha(q_1)O(e^{-\delta' z(q_1)^{n/2}})$. So we know in particular that  $|w_n(q_1)|\leq C|w_n(q_2)|$.  Notice that
\begin{equation}
\label{e2.1}
|\tau_\alpha|=|w_\alpha(q_2)\zeta_\alpha(q_2)-w_\alpha(q_1)\zeta_\alpha(q_1)|\leq C e^{-\delta' z(q_1)^{n/2}}|\tau_\alpha|+|w_\alpha(q_1)|\cdot |\zeta_\alpha(q_2)-\zeta_\alpha(q_1)|	.
\end{equation}
Let $q_t$ $(t\in [0, 1])$ be the straight line connecting $q_1$ and $q_2$ in the $w$ coordinates.  Then 
\begin{equation}
\label{e2.2}
|\zeta_\alpha(q_2)-\zeta_\alpha(q_1)|\leq \sum_{\beta=1}^n\sup_{t\in [0,1]} |\frac{\p \zeta_\alpha}{\p w_\beta}(q_t)|\cdot |\tau_\beta|\leq Ce^{-\delta' z(q_1)^{n/2}}\Big(\sum_{j =1}^{n-1}|\tau_j|+\frac{1}{|w_n(q_1)|}|\tau_n| \Big).	
\end{equation}
Combining \eqref{e2.1} and \eqref{e2.2} with $\alpha=n$, we get that when $z_1\gg1$, 
\begin{equation*}
|\tau_n|\leq Ce^{-\delta' z(q_1)^{n/2}}
|w_n(q_1)|\cdot \sum_{j = 1}^{n-1}|\tau_j|.
\end{equation*}
If we now sum \eqref{e2.1} for $\alpha=1\dots n-1$ and use \eqref{e2.2}, then we obtain that $\tau_\alpha=0$ for $1 \leq \alpha \leq n-1$  and hence that $\tau_n=0$ as well.

Next, we claim that away from a sufficiently large compact subset, $G_k$ maps no point into $F_k(\bf{0}_L)$.  To see this, assume that there exists a point $p \in X$ with $G_k(p) = F_k(x)$, where $x \in \bf{0}_L$. Letting $q = \Phi^{-1}(p)$, if $z(q)$ is sufficiently large, then $q$ and $x$ must be contained in a same $w$-coordinate chart as above. Taking the $n$th component of $\Psi( G_k(p)) = \eta(p)$, yields
\begin{align}
0 = w_n(x) = \Psi_n(F_k(x)) = \eta_n(p) =  \eta_n(\Phi(q)) = w_n(q)( 1 + \zeta_n(q)).
\end{align}
But if $z(q) \gg 1$, we obtain a contradiction since $w_n(q) \neq 0$. The proposition then follows from the above. 
\end{proof}

\begin{remark}
Recall the coordinate $( \eta \circ \Phi)_n = w_n (1 + \eta_n)$, 
and since  $e^{-\delta z^{n/2}} \sim e^{-\delta'\sqrt{ -\log |w_n|}}$, the complex structure $I$ is not even H\"older continuous at the divisor.  Thus, one cannot invoke any known weak version of the Newlander-Nirenberg theorem to construct a complex-analytic compactification; see for example \cite{HillTaylor}.  This is the reason why we had to deviate from the known approaches to constructing such compactifications; compare for example the proofs of \cite[Theorem 3.1]{HHN} and \cite[Theorem 1.6]{ChiLi}. 
\end{remark}

Denote by $Z$ the image of $\overline{G}_k$. 
It follows from the Remmert-Stein theorem \cite[Chapter II.8, Theorem 8.7]{Demailly} that $Z$ is a complex analytic variety in a neighborhood of $F_k(\textbf{0}_L)\cong D$. Since $Z$ is topologically a manifold by Proposition \ref{p:2-3}, it follows that $Z$ is locally irreducible. Thus, by \cite[Chapter II.7, Corollary 7.13]{Demailly}, the normalization map $Z'\rightarrow Z$ is a homeomorphism.

Denote by $D'$ the copy of $D$ in $Z'$. Notice that, in the proof of Proposition \ref{p:2-3}, the functions  $\eta_\alpha$ for $1 \leq \alpha \leq n$ can be viewed as local holomorphic functions on a neighborhood of a point $q\in D'$ in $Z'$. Moreover, they define a local topological embedding of $Z'$ into $\mathbb C^n$. So the inverse map is holomorphic, which  implies that $Z'$ must be smooth near $D'$. 
Now we may glue $Z'$ to $X$ and obtain a smooth complex-analytic compactification of $X$, which we denote by $\overline X'$. By construction, $D'$ is a smooth divisor in $\overline{X}'$. 

Denote $\omega' \equiv\sqrt{-1}(\sum_{j=1}^{n-1}d\eta_j\wedge d\bar \eta_j+|\eta_n|^{-2}d\eta_n\wedge d\bar \eta_n)$. This is a locally defined K\"ahler metric in a punctured neighborhood of $D'$, with cylindrical behavior normal to $D'$. It is easy to check using the calculations in the proof of Proposition \ref{p:2-3} that for all $l\geq 0$, 
\begin{equation}
\label{e:estimates on coordinate change}
\sum_{j=1}^{n-1} |\nabla_{\omega'}^l(w_j)|_{\omega'}+|w_n^{-l}\nabla^l_{\omega'}(w_n)|_{\omega'}\leq C_l.
\end{equation}
This is a crucial fact for us. One can also reinterpret this as saying that the metric $\omega'$ is $C^\infty$ uniformly equivalent to the corresponding cylindrical K\"ahler metric defined using $(w_1, \dots, w_n)$. 
Note that $|\p_{w_j}z |\leq C z^{1-n}$ for $1 \leq j \leq n-1$ and $|\p_{w_n}z|\leq Cz^{1-n}|w_n|^{-1}$. In particular, we have that 
\begin{align}
\label{e:nlz}
|\nabla^l_{\omega'}z|_{\omega'}\leq C_lz^{1 -nl}.
\end{align}

Recall that $\Omega$ is the holomorphic volume form on $(X, g)$ with respect to $I$, and $\Omega_{\mathcal C}$ is the corresponding holomorphic $2$-form on the Calabi model space $\mathcal C$. Notice that $\Omega_{\mathcal C}$ is a meromorphic $2$-form on $L$ with a simple pole along the zero section ${\bf 0}_L$, and $\Omega$ can be viewed as a holomorphic $2$-form on $\overline{X}'\setminus D'$.

\begin{lemma}
$\Omega$ is a meromorphic volume form on $\overline{X}'$ with a simple pole along $D'$. In particular, $D'$ is an anticanonical divisor.
\end{lemma}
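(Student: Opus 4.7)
The plan is to work in the local holomorphic coordinates $(\eta_1,\eta_2)$ near a point $q\in D'$ produced in the proof of Proposition \ref{p:2-3}, where $D'=\{\eta_2=0\}$, and to show $\Omega=H\cdot d\eta_1\wedge d\eta_2/\eta_2$ for a holomorphic function $H$ on a full neighborhood of $q$ with $H(q)\neq 0$. Because $\Omega_{\mathcal{C}}$ is a $\mathbb{C}^*$-invariant holomorphic volume form on $L\setminus\mathbf{0}_L$ with a simple pole along $\mathbf{0}_L=\{w_2=0\}$, in the $(w_1,w_2)$ coordinates it must take the form $\Omega_{\mathcal{C}}=g(w_1)\,dw_1\wedge dw_2/w_2$ for some nowhere-vanishing holomorphic function $g$. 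Correspondingly, write $\Omega=G\cdot dw_1\wedge dw_2/w_2$ on the Calabi end with $G$ holomorphic in $(w_1,w_2)$.

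The first technical step is to show $G$ is bounded. A direct computation from \eqref{Cw1w2} gives $|dw_1|_{\omega_{\mathcal{C}}}\sim z^{-1/2}$ and $|dw_2/w_2|_{\omega_{\mathcal{C}}}\sim z^{1/2}$, so $|dw_1\wedge dw_2/w_2|_{\omega_{\mathcal{C}}}$ is uniformly bounded away from $0$ and $\infty$. Combined with $|\Omega-\Omega_{\mathcal{C}}|_g=O(e^{-\underline\delta z})$, this yields $G=g+O(e^{-\underline\delta z})$ on the patch, and in particular $G$ is bounded. The second technical step is the coordinate change. Using $\eta_\alpha=w_\alpha(1+\zeta_\alpha)$ and the derivative bounds on $\zeta_\alpha$ from the proof of Proposition \ref{p:2-3}, one finds $\det(\partial\eta/\partial w)=1+O(e^{-\delta z})$. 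Writing $dw_1\wedge dw_2/w_2=J\cdot d\eta_1\wedge d\eta_2/\eta_2$ with $J=\det(\partial w/\partial\eta)\cdot(\eta_2/w_2)$, one concludes that $J$ is a holomorphic function of $(\eta_1,\eta_2)$ with $J=1+O(e^{-\delta z})$, hence bounded.

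Putting these pieces together, $\Omega=(G\cdot J)\cdot d\eta_1\wedge d\eta_2/\eta_2$, where $G\cdot J$ is a bounded holomorphic function on $\{\eta_2\neq 0\}$ in a neighborhood of $q$. By Riemann's removable singularity theorem it extends to a holomorphic function $H$ across $\{\eta_2=0\}$, and since $G\to g$ and $J\to 1$ as $\eta_2\to 0$, one has $H(q)=g(w_1(q))\neq 0$. As this holds at every point of $D'$, $\Omega$ is meromorphic on $\overline{X}'$ with a simple pole along $D'$. Because $\Omega$ is nowhere vanishing on $X$, its divisor on $\overline{X}'$ equals $-D'$, so $K_{\overline{X}'}+D'\sim 0$ and $D'$ is anticanonical. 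The main obstacle I expect is the bookkeeping step of converting the metric-normed asymptotic into a genuine pointwise bound on the scalar coefficient $G$; once the polynomial-in-$z$ sizes of $|dw_1|_{\omega_{\mathcal{C}}}$ and $|dw_2/w_2|_{\omega_{\mathcal{C}}}$ are identified, the rest of the argument is essentially immediate.
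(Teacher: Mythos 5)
Your overall strategy is the one the paper uses: compare $\Omega$ with $\Omega_{\mathcal C}$ near $D'$, exploit the sizes $|dw_1|_{\omega_{\mathcal C}}\sim z^{-1/2}$ and $|dw_2/w_2|_{\omega_{\mathcal C}}\sim z^{1/2}$ together with the coordinate comparison $\eta_\alpha=w_\alpha(1+\zeta_\alpha)$ from Proposition \ref{p:2-3}, and conclude that the coefficient of $\Omega$ in the frame $\eta_2^{-1}d\eta_1\wedge d\eta_2$ is bounded and nonvanishing in the limit, hence extends across $D'$. (The paper packages exactly this as the two-sided bound $C^{-1}\leq|\Omega|_{\omega_{\overline X'}}\leq C$, using $\omega_{\mathcal C}^2=\frac12\Omega_{\mathcal C}\wedge\overline{\Omega}_{\mathcal C}$.) However, two of your intermediate identities are false as stated, and they carry the logical weight of your write-up. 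First, you cannot write $\Omega=G\cdot dw_1\wedge dw_2/w_2$ with $G$ holomorphic in $(w_1,w_2)$: the coordinates $(w_1,w_2)$ are holomorphic for the model structure $I_{\mathcal C}$, whereas $\Omega$ is holomorphic for $I\neq I_{\mathcal C}$; relative to $I_{\mathcal C}$ the form $\Omega$ has nonzero (exponentially small, but nonzero) $(1,1)$- and $(0,2)$-components, and even the coefficient of its $(2,0)_{\mathcal C}$-part is merely smooth, not $I_{\mathcal C}$-holomorphic. Second, for the same reason $dw_1\wedge dw_2/w_2$ is not in general a function multiple of $d\eta_1\wedge d\eta_2/\eta_2$, and your factor $J=\det(\partial w/\partial\eta)\cdot(\eta_2/w_2)$ is not $I$-holomorphic, since $w_1,w_2$ are not $I$-holomorphic functions. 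Consequently the holomorphy of $H=GJ$ on $\{\eta_2\neq0\}$, which you need in order to invoke Riemann's removable singularity theorem, does not follow from your factorization.

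The repair is short and brings you back to the paper's argument. Holomorphy of $H$ is automatic: since $(\eta_1,\eta_2)$ are $I$-holomorphic coordinates and $\Omega$ is $I$-holomorphic, one has $\Omega=H\,\eta_2^{-1}d\eta_1\wedge d\eta_2$ with $H$ holomorphic on $\{\eta_2\neq0\}$ near $q$, with no need for any exact $w$-side factorization. What actually requires proof is the estimate $H=g(w_1)+o(1)$ as $\eta_2\to0$, and here your norm computation is the right ingredient, but it must be run as an estimate rather than as an identity: write $\Omega=\Omega_{\mathcal C}+\Gamma$ with $|\Gamma|_{g}=O(e^{-\underline\delta z})$, expand $d\eta_1\wedge d\eta_2$ in the frame generated by $dw_1$, $dw_2/w_2$, $d\bar w_1$, $d\bar w_2/\bar w_2$ using the bounds on $\zeta_\alpha$ and its derivatives (equivalently \eqref{e:estimates on coordinate change}), and check that every mixed-type contribution to $H$ is $O(e^{-\delta' z})$ because the relevant frame elements have $\omega_{\mathcal C}$-norms of polynomial size $z^{\pm1/2}$ while the errors decay exponentially; this is precisely the content of the paper's bound $C^{-1}\leq|\Omega|_{\omega_{\overline X'}}\leq C$. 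With that restatement, your conclusion --- boundedness of $H$, holomorphic extension across $D'$, $H|_{D'}=g\neq0$, hence $\mathrm{div}(\Omega)=-D'$ and $D'$ anticanonical --- is correct and coincides with the paper's proof.
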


\begin{proof}
We may locally near a point $p\in D$ write  $\Omega_{\mathcal C}=f(w_1, \dots, w_n){w_n}^{-1}dw_1 \wedge \cdots \wedge dw_n$, where $f$ is a nowhere vanishing local holomorphic function. Then  $\Omega=f(w_1, \dots,  w_n){w_n}^{-1}dw_1 \wedge \cdots \wedge dw_n +\Gamma$ with $|\frac{\Gamma\wedge \overline{\Gamma}}{\omega_{\mathcal C}^n}|=O(e^{-\underline\delta z^{n/2}})$. But $\omega_{\mathcal C}^n=\frac{1}{2}( \sqrt{-1})^{n^2}\Omega_{\mathcal C}\wedge\bar\Omega_{\mathcal C}\sim -|w_n|^{-2}dw_1 \wedge d\bar w_1 \wedge \cdots \wedge dw_n \wedge d\bar w_n$ near $D$. It follows that $C^{-1}\leq |\Omega|_{\omega'}\leq C,$ so the form $w_n \Omega$ extends holomorphically to $D'$ locally.  
\end{proof}

Now we choose a finite open cover $\{V_j\}$ of $D$ such that each $V_j$ has holomorphic coordinates $w_{1, j}, \dots, w_{n-1,j}$ given by the quotients $s_{k, j}/s_{0, j}$ for some sections $s_{0, j}, \dots, s_{n-1, j}\in H^0(D, L^k)$, $1 \leq k \leq n-1$,  and $L|_{V_j}$ has a trivializing section $e_j=s_{n, j}/s_{0, j}$ for some $s_{n, j}\in H^0(D, L^{k+1})$.  Consequently we obtain an open cover $\{U_j\}$ of  a neighborhood of $\textbf{0}_L\simeq D$ in $L$, with  holomorphic coordinates $\{w_{1, j}, \dots, w_{n, j}\}$, such that a point $\xi\in L$ is represented as $\xi=w_{n, j} \cdot e_j(w_{1, j}, \dots, w_{n-1,j})$. Write $e^{-\varphi_j(w_{1,j}, \dots, w_{n-1,j} )}=|e_j(w_{1,j}, \dots, w_{n-1,j})|_{h}^2$. 

Accordingly, we get an open cover $\{U_j'\}$ of a neighborhood of $D'$ in $\overline X'$. In each $U_j'$ we then have holomorphic coordinates $\eta_{k, j}=\mathcal L(s_{k, j})/\mathcal L(s_{0, j})$ and $\eta_{n, j}=\mathcal L(s_{0, j})/\mathcal L(s_{n, j})$. From the proof of Proposition \ref{p:2-3} we know that $\eta_{k, j} = w_{k, j}$ on $U_j'\cap D'$ for $1 \leq k \leq n-1$, and the transition functions satisfy $\frac{\partial\eta_{n,j}}{\partial\eta_{n, i}}  =\frac{\partial w_{n, j}}{\partial w_{n, i}}$ on $U_i'\cap U_j'\cap D'$.  This implies that the conormal bundle $N_{D'}^{-1}$ is isomorphic to $L^{-1}$, so in particular $D'$ has ample normal bundle in $\overline{X}'$. Moreover, the local functions $\eta_{n, j}$ can be viewed as a global section $S_{D'}$ of the line bundle $[D']\simeq K_{\overline{X}'}^{-1}$ with a simple zero along $D'$. 

Now the local functions $\varphi_j$ define a hermitian metric on $N_{D'}\simeq [D']$. Fix an arbitrary extension of this to a hermitian metric   $h'$ on $[D']$ on a neighborhood of $D'$. On each chart $U_j$, this extension $h'$ has a local representation given by $e^{-\phi_j}$, with $\phi_j=\varphi_j$ along $D'$. 
We consider a smooth closed $(1,1)$-form    
\begin{align}
\omega_m \equiv \frac{n}{n+1} \sqrt{-1} \partial \overline{\partial} (-{\log\,} |S_{D'}|^2_{h'})^{\frac{n+1}{n}}.
\end{align}
It is well-defined and positive definite outside a compact subset of $\overline X'\setminus D'$. 

One can check that $\omega_m$ and $\omega'$ satisfy
\begin{align}
\label{e:op1}
 C^{-1}   z^{1-n} \omega' \leq \omega_m \leq C z \omega',
\end{align}
for some constant $C$, and for each $k \geq 0$, 
\begin{align}
\label{e:op2}
|\nabla^k_{\omega'} \omega_m|_{\omega'} = O( z^{m_k} )
\end{align}
for some $m_k \in \NN_0$, as $z \to \infty$. Therefore covariant derivative with respect to $\omega'$ and $\omega_m$ differ by at most polynomial error terms.  
\begin{proposition}
\label{l:11 form decay}
For all $l>0$ and all $\delta<\min(\delta_k, \underline{\delta})$, we have that
\begin{align}
\label{omega-m}
|\nabla_{\omega_m}^l(\omega-\omega_m)|_{\omega_m}=O(e^{-\delta z^{n/2}}).
\end{align}
Consequently, $[\omega]\in {\rm im}(H^2_c(X) \to H^2(X))$.
\end{proposition}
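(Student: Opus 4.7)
The plan is to compare $\omega_m$ directly with the Calabi model form $\omega_{\mathcal C}$ via the coordinate change $\eta_\alpha = w_\alpha(1+\zeta_\alpha)$ from Proposition \ref{p:L2 estimate}, and then combine the result with the hypothesis $|\nabla^l_g(\omega - \omega_{\mathcal C})|_g = O(e^{-\underline\delta z})$. The cohomological conclusion will follow separately from the fact that $\omega_m$ is globally $dd^c_I$-exact on the end.

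First I would unfold the local picture. On each chart $U_j'$ we have $|S_{D'}|^2_{h'} = |\eta_{2,j}|^2 e^{-\phi_j(\eta_{1,j},\eta_{2,j})}$ with $\phi_j|_{D'} = \varphi_j$, while on $\mathcal C$ we have $|\xi|^2_{h_L} = |w_{2,j}|^2 e^{-\varphi_j(w_{1,j})}$. Any ambiguity in the off-$D'$ extension of $\phi_j$ contributes only $O(|\eta_{2,j}|) = O(e^{-z^2/2})$, which is negligible compared to any $e^{-\delta z}$. Combining this with $\eta_\alpha = w_\alpha(1+\zeta_\alpha)$ and $|\nabla^l\zeta_\alpha|_g = O(e^{-\delta_k z})$ from Proposition \ref{p:L2 estimate}, I would get $-\log|S_{D'}|^2_{h'} = -\log|\xi|^2_{h_L} + O(e^{-\delta_k z})$, and hence $(-\log|S_{D'}|^2_{h'})^{3/2} = (-\log|\xi|^2_{h_L})^{3/2} + O(z\,e^{-\delta_k z})$.

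Next, applying $dd^c_I$ and using $|I - I_{\mathcal C}|_g = O(e^{-\underline\delta z})$ to pass between $dd^c_I$ and $dd^c_{\mathcal C}$, together with the bound $|\nabla^l_{\omega_{\overline X'}} z|_{\omega_{\overline X'}} \le C_l z^{-l}$ coming from \eqref{e:estimates on coordinate change}, I would conclude that $|\nabla^l_{\omega_m}(\omega_m - \omega_{\mathcal C})|_{\omega_m} = O(e^{-\delta z})$ for any $\delta < \min(\delta_k,\underline\delta)$. Adding this to the hypothesis $|\nabla^l_g(\omega - \omega_{\mathcal C})|_g = O(e^{-\underline\delta z})$ — and using that $g$ and $\omega_m$ induce $C^\infty$-uniformly equivalent geometries on the end, again by \eqref{e:estimates on coordinate change} — gives \eqref{omega-m}. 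The main technical obstacle is tracking the polynomial $z$-factors produced by repeatedly differentiating $z^{3}$: they must be absorbed into the freedom to decrease $\delta$ slightly, and the key device for doing so in covariant form is precisely the $C^\infty$-equivalence with the cylindrical model metric provided by \eqref{e:estimates on coordinate change}.

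For the cohomology statement, I would use that $\omega_m = dd^c_I f$ globally on the end with $f = (-\log|S_{D'}|^2_{h'})^{3/2}$, so $\omega_m$ is already exact on the end. By the long exact sequence $H^1(\mathrm{end}) \to H^2_c(X) \to H^2(X) \to H^2(\mathrm{end})$, it suffices to show the closed, exponentially decaying form $\omega - \omega_m$ is exact on the end. Decomposing $\omega - \omega_m = dz\wedge\beta_1(z) + \beta_2(z)$ in terms of $z$-dependent forms on $\Nil^3_{b,c,\tau}$, closedness forces $\partial_z\beta_2 + d_{\Nil}\beta_1 = 0$, and the exponential decay of every component of $\omega - \omega_m$ in the fixed coframe $\{dz, dx, dy, \theta\}$ (up to polynomial $z$-factors) ensures that
\[
\alpha(z) := -\int_z^\infty \beta_1(z')\,dz'
\]
is a well-defined smooth 1-form on the end with $d\alpha = \omega - \omega_m$. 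Gluing $d^cf + \alpha$ against a cutoff supported near infinity then exhibits $\omega$ as cohomologous to a compactly supported closed form, yielding $[\omega]\in\mathrm{im}(H^2_c(X)\to H^2(X))$.
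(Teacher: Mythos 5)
Your proposal is correct and follows essentially the same route as the paper: comparing $\omega_m$ with the Calabi form via the relations $\eta_\alpha = w_\alpha(1+\zeta_\alpha)$ and $\phi_j = \varphi_j + O(\eta_{2,j})$, controlling derivatives through the key estimate \eqref{e:estimates on coordinate change}, and then obtaining the compactly supported representative by integrating the decaying closed form from infinity and gluing $d^c_I(-\log|S_{D'}|^2_{h'})^{3/2}$ plus the resulting primitive against a cutoff. The only cosmetic differences are that the paper passes through the intermediate form $\omega' = dd^c_I z^3$ rather than comparing with $\omega_{\mathcal C}$ in one step, and cites \cite[Lemma 3.7]{HSVZ} for the integration-from-infinity step that you carry out explicitly.
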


\begin{proof}
By assumption, $\omega=\omega_{\mathcal C}+\beta_1$, where $|\nabla^l_{\omega_{\mathcal C}}\beta_1|_{\omega_{\mathcal{C}}}=O(e^{-\underline{\delta}z^{n/2}})$ for all $l\geq 0$. 
Let $\tilde{\omega} \equiv \frac{n}{n+1} \sqrt{-1} \partial_I \overline{\partial}_I z^{n+1}$.
Writing  $\omega_{\mathcal C}=\tilde{\omega}+\beta_2$, using \eqref{e:nlz}, \eqref{e:op1}, and \eqref{e:op2}, we have that $|\nabla^l_{\tilde{\omega}}\beta_2|_{\tilde{\omega}}=O(e^{-\delta z^{n/2}})$ for all $l\geq 0$ and $\delta<\underline{\delta}$. In $U_j$,  we have that $$z^n=-{\log\,} |\xi|_{h_L}^2=-{\log\,} |w_{n, j}|^2+\varphi_j(w_{1,j}, \dots, w_{n-1,j}).$$
On the other hand,  we have that
$$|S_{D'}|^2_{h'}=-{\log\,} |\eta_{n, j}|^2+\phi_j( \eta_{1,j}, \dots, \eta_{n,j}).$$
Notice that $\eta_{n,j}=w_{n,j}(1+\beta_3)$, with $|\nabla^l_{\omega}\beta_3|_\omega =O(e^{-\delta_k z^{n/2}})$ for all $l\geq 0$, and $\phi_j(\eta_{1,j}, \dots, \eta_{n,j})=\varphi_j(w_{1,j}, \dots, w_{n-1,j})+\eta_{n,j}P(\eta_{1,j}, \dots, \eta_{n, j})$ for some smooth function $P$. Estimate \eqref{omega-m} now follows from a straightforward computation, using the key property \eqref{e:estimates on coordinate change}, \eqref{e:op1}, and \eqref{e:op2}.

By integrating from infinity as in \cite[Lemma 3.7]{HSVZ}, away from a compact set we may write
 $\omega= \omega_m +d\sigma$, where $\sigma$ is a smooth real-valued $1$-form such that $|\nabla^l_{\omega_m}\sigma|_{\omega_m} =O(e^{-\delta z^{n/2}})$ for all $l\geq 0$. Then the smooth real-valued $2$-form
\begin{align}
\omega - d \Big( \chi \Big(\sigma + \frac{n}{2(n+1)} d^c_I \left( - \log |S_{D'}|^2_{h'} \right)^{\frac{n+1}{n}}
\Big) \Big)
\end{align}
is cohomologous to $\omega$ and has compact support, where $\chi$ is a cutoff function which is $1$ in a neighborhood of infinity and $0$ on a compact set.
\end{proof}

Now we claim that $\overline X'$ is a \emph{weak Fano manifold}, i.e., $\overline{X}'$ is projective and $K_{\overline X'}^{-1}$ is big and nef. Assuming projectivity, the nef property is obvious because $K_{\overline X'}^{-1} = [D']$ for some smooth divisor $D'$ with ample normal bundle, and it is also elementary to deduce the big property from this fact (see for example \cite[Lemma 2.3]{Voisin}). Projectivity follows from a similar reasoning as in the proof of  \cite[Theorem 2.1]{ConlonHein}; see in particular \cite[p.4, proof in the smooth case]{ConlonHein}. In short, one first proves using the theory of the Remmert reduction (see the beginning of Section \ref{s:dP}) that $K_{\overline X'}^{-1}$ is semiample. This also strongly relies on the fact that $K_{\overline X'}^{-1} = [D']$, where $D'$ has ample normal bundle. It then follows that $\overline X'$ is Moishezon, which implies that Hodge theory holds on $\overline X'$. Also, $h^{0,2}(\overline X') = 0$ thanks to the Grauert-Riemenschneider vanishing theorem \cite[Satz 2.1]{GR}. Using the latter two properties, the fact that $\overline X'$ admits compactly supported K\"ahler classes by Proposition \ref{l:11 form decay}, and another vanishing theorem on the open manifold $X$ due to Grauert-Riemenschneider \cite[p.278, Korollar]{GR}, one can explicitly write down a K\"ahler form on $\overline X'$. The Kodaira embedding theorem now implies that $\overline X'$ is projective because we already know that $h^{0,2}(\overline X') = 0$. We note that a similar gluing argument of K\"ahler forms will appear again in the proof of Lemma \ref{kahlergluing} below. The argument works here even though, unlike in \cite{ConlonHein}, $D'$ is {not} Fano, so that $H^{1,1}(\overline X')$ may not surject onto $H^2(X)$. The key point is that we are restricting ourselves to the case of compactly supported K\"ahler classes on $X$.

To finish the proof of Theorem \ref{t:main1}, it remains to identify the Ricci-flat K\"ahler metric $\omega$ on $X$ with a slight generalization of the construction given by Tian-Yau \cite[Theorem 4.2]{TianYau}. Let $M$ be an $n$-dimensional projective manifold containing a smooth divisor $D$ with ample normal bundle such that $K_M^{-1} = [D]$. Then $M$ is necessarily weakly Fano. Fix a defining section $S\in H^0(M, K_M^{-1})$ of the divisor $D$ and denote $X\equiv M\setminus D$. Following the beginning of the argument from \cite{ConlonHein} sketched above, one first proves that $K_M^{-1}$ is semiample. Here this also follows from the Kawamata basepoint free theorem \cite[Theorem 6.1]{Kawa} because we already know that $M$ is projective. Let $E$ be the non-ample locus of $K_M^{-1}$. This is a union of some subvarieties of $M$ that are disjoint from $D$. We  fix a smooth hermitian metric $\tilde h$ on $K_M^{-1}$ such that its curvature form is nonnegative everywhere and is positive away from $E$. Denote $\tilde v\equiv-{\log\,} |S|^2_{\tilde h}$. 
	
We may view $S^{-1}$ as a holomorphic volume form $\Omega_X$ on $X$ with a simple pole along $D$. Let $\Omega_D$ be the  holomorphic volume form on $D$ given by the residue of $\Omega_X$ along $D$. Let $h_D$ be a hermitian metric on $K_M^{-1}|_D$ such that its curvature form is a Calabi-Yau metric $\omega_D$ on $D$. Rescaling $S$ if necessary, we may assume that
$\omega_D^{n-1}=\frac{1}{2}(\sqrt{-1})^{(n-1)^2}\Omega_D\wedge\bar\Omega_D.$
	
One can extend $h_D$ to a smooth hermitian metric $h$ on $K_M^{-1}$ such that its curvature form is positive definite in a neighborhood of $D$. Fix such an extension, denoted by $h_M$.   For any $A\in \mathbb R$, we denote $h_A \equiv h_Me^{-A}$. Then it is easy to see that outside a compact subset of $X$, $(-{\log\,} |S|_{h_A}^2)^{\frac{n+1}{n}}$  is strictly plurisubharmonic. As in the beginning of this section, by composing with a convex function we may find a global smooth function $v_A$ on $X$ with $dd^c v_A\geq 0$ such that $v_A=\frac{n}{n+1}(-{\log\,} |S|_{h_A}^2)^{\frac{n+1}{n}}$ outside a compact set and $v_A$ is constant in a neighborhood of $E$. 
	
Abusing notation, we denote by $H^2_{c, +}(X)$ the subset of ${\rm im}(H^2_c(X) \to H^2(X))$ consisting of classes $\mathfrak{k}$ such that $\int_Y \mathfrak{k}^p>0$ for any compact analytic subset $Y$ of $X$ of pure complex dimension $p > 0$. Notice that any such $Y$ must be contained in $E$.

\begin{lemma}
\label{kahlergluing}
For all $\mathfrak{k}\in H^2_{c,+}(X)$ and all numbers $A\in \mathbb R$, $\tau>0$, there exists a smooth K\"ahler form $\omega_{A,\tau}$ on $X$ such that $[\omega_{A,\tau}]=\mathfrak{k}$ and $|\nabla^l_{\omega_{A,\tau}}(\omega_{A, \tau}-\tau dd^cv_A)|_{\omega_{A,\tau}} = O(|S|^{\delta_0}_{h_M})$ for some $\delta_0>0$ and all $l \geq 0$.
\end{lemma}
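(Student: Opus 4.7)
The plan is to write $\omega_{A,\tau}$ as $\tau dd^c v_A$ plus a correction that supplies positivity precisely where the Tian-Yau model fails to be strictly positive -- namely on a neighborhood of the non-ample locus $E$, since $v_A$ was arranged to be constant there -- and that simultaneously adjusts the cohomology class to $\mathfrak{k}$. With such a setup the decay estimate will be automatic: the correction will have compact support in $X$, so $\omega_{A,\tau}-\tau dd^c v_A$ vanishes identically near $D$ and gives the required $O(|S|^{\delta_0}_{h_M})$ bound (in fact with arbitrarily large $\delta_0$).

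First I would choose a smooth compactly supported closed $(1,1)$-form $\alpha$ on $X$ with $[\alpha]=\mathfrak{k}$; the existence of such an $\alpha$ follows from the projectivity of $M$ together with the Grauert-Riemenschneider vanishing $h^{0,2}(M)=0$ recalled earlier in the section. The crux is then to modify $\alpha$ to $\eta\equiv\alpha+dd^c\phi$, for some compactly supported $\phi$, so that $\eta$ becomes \emph{strictly positive} on a neighborhood $V$ of $E$. This is where the hypothesis $\mathfrak{k}\in H^2_{c,+}(X)$ enters: since $\mathfrak{k}$ pairs positively with every compact analytic subvariety of $X$ and each such subvariety lies in $E$, the restriction $\mathfrak{k}|_V$ is a Kähler class on $V$ once $V$ is chosen small enough, and hence admits a local Kähler representative $\omega_V$. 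Applying the local $dd^c$-lemma on $V$ to $\alpha|_V-\omega_V$ and extending the resulting potential by a cutoff produces the desired compactly supported $\phi$.

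With $\eta$ in hand I would set $\omega_{A,\tau}\equiv\tau dd^c v_A+\eta$. On $V$ one has $dd^c v_A=0$ and $\eta>0$, so $\omega_{A,\tau}$ is Kähler on $V$. Outside the compact support of $\eta$, the sum equals $\tau dd^c v_A$, which is strictly positive on the Tian-Yau end. In the intermediate annulus one may need a further convexity correction of the type used in the projectivity argument above -- concretely, a regularized maximum of psh potentials, applied after locally writing both summands as $dd^c$ of psh functions -- to ensure strict positivity while leaving the cohomology class and the asymptotic behaviour at infinity unchanged.

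The main obstacle is the construction of $\phi$ making $\eta$ strictly positive on a full neighborhood of $E$ while still representing $\mathfrak{k}$. Unlike in the Fano setting of Conlon-Hein, where one can rely on surjectivity of $H^{1,1}(\overline X')\to H^2(X)$, here that surjectivity fails, so one must genuinely exploit the strict positivity hypothesis $\mathfrak{k}\in H^2_{c,+}(X)$ to manufacture a Kähler representative of $\mathfrak{k}$ near $E$. Once this is achieved, gluing with $\tau dd^c v_A$ via the standard psh-convexity trick is routine, and the compact support of the correction makes the asymptotic estimate trivial.
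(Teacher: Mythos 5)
Your overall strategy (use $h^{0,2}(M)=0$ to get a $(1,1)$-representative, use $\mathfrak{k}\in H^2_{c,+}(X)$ to get positivity near $E$, then add $\tau dd^c v_A$ and glue) is the paper's strategy, but two steps as you state them do not go through. First, the existence of a \emph{compactly supported} closed $(1,1)$-form representing $\mathfrak{k}$ does not follow from projectivity and $h^{0,2}(M)=0$: those facts let you replace a compactly supported $2$-form representative by a closed $(1,1)$-form $\eta$ on $M$ only at the cost of an exact correction $d\gamma$, which destroys compact support. To restore compact support you would need a $dd^c$-lemma on the end (a punctured neighborhood of $D$); this is available for $n\geq 3$ (see \cite[Lemma 4.3]{vanC}), but not in the crucial case $n=2$, where $D$ is an elliptic curve and the end has nontrivial first cohomology. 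The paper therefore only normalizes $\beta$ so that $\beta|_D=0$ (using the $dd^c$-lemma on the compact divisor $D$ itself), and the decay $O(|S|^{\delta_0}_{h_M})$ in the statement is exactly what this weaker normalization yields against the model $\tau dd^c v_A$; your claim that the decay is ``automatic, with arbitrarily large $\delta_0$'' rests on the unavailable compact-support step, and with the correct normalization the decay of $\beta$ must actually be checked (easy, but not vacuous).

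Second, the positivity in the intermediate annulus is a genuine missing idea, not a routine regularized-maximum patch. Between the neighborhood $V$ of $E$ and the end, $\tau dd^c v_A$ is merely semipositive (indeed $v_A$ is constant near $E$ and only a convex composition in between), while your corrected form $\eta$ is positive only on $V$ and can be arbitrarily negative in the annulus; a regularized maximum of relative potentials can only interpolate between regions where each candidate is already positive (and even then one must arrange the ordering of the two potentials across the transition shell, which is not automatic here), so it cannot create positivity where neither summand has any. The paper supplies the missing positivity with the exact buffer term $C_1\, dd^c\bigl((\rho\circ(\tfrac{1}{C_2}\tilde v))\,\tilde v\bigr)$, where $\tilde v=-\log|S|^2_{\tilde h}$ and the curvature of $\tilde h$ is nonnegative and strictly positive off $E$: choosing $C_1$ large dominates the negativity of $\beta+dd^c(\chi u_0)$ away from $E$, and the cutoff $\rho$ at scale $C_2$ removes the term near $D$, so the asymptotic estimate survives while the cutoff errors are absorbed by $\tau dd^c v_A$ once $C_2$ is large. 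Finally, your assertion that $\mathfrak{k}$ is K\"ahler on a neighborhood of $E$ because it pairs positively with the subvarieties contained in $E$ is precisely the generalized Demailly--P\u{a}un criterion \cite[Theorem 1.1]{CT}; it should be invoked as such (the paper uses it to produce a global function $u_0$ with $\beta+dd^cu_0>0$ near $E$), and doing so also lets you avoid the local $dd^c$-lemma on $V$ that your version additionally requires.
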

	
\begin{proof} 
By hypothesis, $\mathfrak{k}$ is represented by a closed $2$-form $\beta$ on $M$ such that $\beta = 0$ away from some compact subset of $M$. In particular, $\beta$ trivially extends to a smooth closed $2$-form on $M$. By the Grauert-Riemenschneider vanishing theorem \cite[Satz 2.1]{GR}, or (because we are assuming that $M$ is projective) by the Kawamata-Viehweg vanishing theorem (see for example \cite[Theorem 2.64]{KM}), we have that $h^{0,2}(M) = 0$. Thus, there exist a smooth closed $(1,1)$-form $\eta$ and a smooth $1$-form $\gamma$ on $M$ such that $\beta = \eta + d\gamma$. Because $\beta$ has compact support in $X \subset M$, it follows that $\eta$ is $d$-exact on some open neighborhood of $D$ in $M$. In particular, $\eta|_D = dd^c f$ for some smooth function $f$ on $D$. We choose an arbitrary smooth extension of $f$ to $M$ and replace $\beta$ by $\eta - dd^c f$. Thus, we are now assuming without loss of generality that $\beta$ is a smooth closed $(1,1)$-form on $M$ such that $\beta|_D = 0$ and $[\beta|_M] = \mathfrak{k}$. It is straightforward to check that such a form $\beta$ satisfies the exponential decay estimate from the statement of the lemma with respect to any reference metric of the form $\tau dd^c v_A$ outside a sufficiently large compact subset of $X$. (If $n \geq 3$, then instead of applying the $dd^c$-lemma on $D$ it is possible to apply a stronger $dd^c$-lemma on the complement of a large compact subset of $X$ to arrange directly that $\beta$ still has compact support in $X$; see for example \cite[Lemma 4.3]{vanC}.)

Because $\mathfrak{k} \in H^2_{c,+}(X)$, by the generalized Demailly-P\u{a}un criterion of \cite[Theorem 1.1]{CT} there exists a smooth function $u_0$ on $X$ such that $\beta+dd^cu_0$ is positive in a neighborhood $U$ of $E$. Choose a cutoff function $\chi$ of compact support contained in $U$ which equals $1$ on a neighborhood of $E$, and let $\rho=\rho(t)$ be a cutoff function on $[0,\infty)$ which equals $1$ when $t\leq1$ and vanishes when $t\geq 2$. For fixed $A$ and $\tau$, we define
\begin{align}
\omega_{A, \tau} \equiv \beta+dd^c\Big(\chi u_0+C_1\Big(\rho\circ \Big(\frac{1}{C_2} \tilde{v}\Big)\Big) \cdot  \tilde{v}+\tau v_A\Big).
\end{align}
It is straightforward to verify that if we first choose $C_1$ large and then $C_2$ large (depending on $C_1)$, then $\omega_{A, \tau}$ is globally positive on $X$.
\end{proof}

One can also see that $\omega_{A, \tau}^n=e^{-f_{A, \tau}}\tau^n \frac{1}{2}(\sqrt{-1})^{{n^2}}\Omega_X\wedge\bar\Omega_X$
for some function $f_{A, \tau}$ which tends to zero at infinity.

\begin{lemma}
\label{l:fix the constant A}
Given any $\tau>0$, there is a unique $A=A(\tau)$ such that
\begin{align}
\int_X (\omega_{A,\tau}^n-\tau^n \frac{1}{2}(\sqrt{-1})^{n^2}\Omega_X\wedge\bar\Omega_X)=0.
\end{align}
\end{lemma}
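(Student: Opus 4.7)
The plan is to show that
\begin{equation*}
I(A) \equiv \int_X \Bigl(\omega_{A,\tau}^n - \tau^n \tfrac{1}{2}(\sqrt{-1})^{n^2}\Omega_X\wedge\bar\Omega_X\Bigr)
\end{equation*}
is a well-defined, smooth, and affine-linear function of $A$ with strictly positive slope, from which existence and uniqueness of the desired $A(\tau)$ follow immediately. Absolute integrability of the integrand follows by combining $|\omega_{A,\tau}-\tau\,dd^c v_A|_{\omega_{A,\tau}} = O(|S|^{\delta_0}_{h_M})$ from Lemma~\ref{kahlergluing} with the standard Tian--Yau asymptotic expansion of $(dd^c v_A)^n$ near $D$: together these show that the integrand decays at a rate that beats the growth of the volume of the sublevel sets $\{u \leq R\}$, where $u := -\log|S|^2_{h_M}$.

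Since outside a fixed compact set $v_A$ equals $\tfrac{n}{n+1}(u+A)^{(n+1)/n}$, one may arrange $v_A$ to depend smoothly on $A$ by fixing the convex smoothing independently of $A$; moreover, any two such choices differ by a compactly supported function, which does not affect $I(A)$ by Stokes. Differentiating under the integral gives
\begin{equation*}
\frac{dI}{dA} \;=\; n\tau \int_X \omega_{A,\tau}^{n-1}\wedge dd^c\dot v_A,
\end{equation*}
where $\dot v_A := \partial v_A/\partial A$ equals $(u+A)^{1/n}$ outside the same compact set. Since $\omega_{A,\tau}$ is closed, the integrand equals $d(\omega_{A,\tau}^{n-1}\wedge d^c\dot v_A)$, and by Stokes' theorem
\begin{equation*}
\frac{dI}{dA} \;=\; n\tau\lim_{R\to\infty}\int_{\{u=R\}} \omega_{A,\tau}^{n-1}\wedge d^c\dot v_A.
\end{equation*}

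The algebraic heart of the argument is that all $A$-dependence cancels on the boundary. Writing $w := u+A$ and $\alpha := dd^c u$ (which extends smoothly across $D$), outside the compact set $\omega_{A,\tau} = \tau\,dd^c v_A$ and
\begin{equation*}
dd^c v_A \;=\; w^{1/n}\alpha + \tfrac{1}{n}w^{(1-n)/n}\,du\wedge d^c u.
\end{equation*}
A binomial expansion using $(du\wedge d^c u)^2 = 0$ collapses $(dd^c v_A)^{n-1}\wedge d^c\dot v_A$ to a single $A$-independent term,
\begin{equation*}
(dd^c v_A)^{n-1}\wedge d^c\dot v_A \;=\; \tfrac{1}{n}\,\alpha^{n-1}\wedge d^c u.
\end{equation*}
As $R\to\infty$ the level set $\{u=R\}$ shrinks onto $D$ as a circle bundle inside the normal bundle $K_M^{-1}|_D$; fiber integration, $S^1$-invariance of the limit, and the identity $\alpha|_D = \omega_D$ then give $\frac{dI}{dA} = c_n\,\tau^n\int_D\omega_D^{n-1}$ for a strictly positive constant $c_n$ depending only on $n$. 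Hence $I(A)$ is affine in $A$ with positive slope, so it has a unique zero.

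The main obstacle will be the rigorous justification of the differentiation under the integral sign and of the boundary limit, both of which require dominating the integrand and its $A$-derivative by an integrable function near $D$ so that one may pass to the limit $R\to\infty$. This reduces to a bookkeeping exercise with the decay estimates already in place, so no new analytic input is needed.
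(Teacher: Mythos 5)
Your proposal is correct and is essentially the paper's own argument: both reduce the $A$-dependence of the integral to a boundary term near $D$ via Stokes, showing the integral is affine-linear in $A$ with slope a nonzero multiple of $\tau^{n}\int_D\omega_D^{n-1}$, from which existence and uniqueness of $A(\tau)$ are immediate. The only cosmetic difference is that the paper computes the finite difference $\int_X(\omega_{A,\tau}^n-\omega_{0,\tau}^n)$ on $\{|S|_h\leq\epsilon\}$ and lets $\epsilon\to 0$, rather than differentiating in $A$, which sidesteps the justification of differentiation under the integral sign that your version requires.
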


\begin{proof}
It is easy to check that the integral is finite for any $A$. For a given $A$, add and subtract $\omega_{0,\tau}^n$ under the integral sign and split up the integral accordingly. For $\epsilon>0$ sufficiently small, we have 
$$\int_{|S|_h\leq\epsilon}(\omega_{A,\tau}^n-\omega_{0, \tau}^n)=\tau\int_{|S|_h=\epsilon}(d^cv_A\wedge \omega_{A,\tau}^{n-1}-d^cv_0\wedge\omega_{0,\tau}^{n-1}).$$
By computing the boundary term explicitly and letting $\varepsilon\to 0$, one sees that 
$$\int_X(\omega_{A, \tau}^n-\omega_{0, \tau}^n)= \lambda \cdot A\tau^{n}\int_D \omega_D^{n-1}$$
for some $\lambda = \lambda(n)\neq 0$. Then the desired condition becomes a linear equation for $A$.
\end{proof}

Using Lemmas \ref{kahlergluing} and \ref{l:fix the constant A} as ingredients, we now get the following existence theorem, which generalizes the classical existence result of Tian-Yau \cite[Theorem 4.2]{TianYau}.

\begin{theorem}
\label{t:TY}
Given any $\mathfrak{k}\in H^2_{c,+}(M)$ and $\tau>0$, there is a complete K\"ahler metric $\omega_\tau=\omega_{A(\tau),\tau}+dd^c\phi$ on $X$ such that $[\omega_\tau]=\mathfrak{k}$ and  $\omega_\tau^n=\tau^n\frac{1}{2}(\sqrt{-1})^{n^2}\Omega_X\wedge\bar\Omega_X,$ and such that for some $\delta_0>0$ and for all $l\geq 0$ we have that $|\nabla_{\omega_{A(\tau),\tau}}^l\phi|_{\omega_{A(\tau),\tau}}=O(e^{-\delta_0 (-{\log\,} |S|^2_{h_M})^{\frac{1}{2}}}).$ 
\end{theorem}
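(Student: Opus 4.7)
The plan is to cast this as a complex Monge–Ampère equation and solve it via the continuity method, following the strategy of Tian–Yau with weighted estimates adapted to the Calabi-type geometry at infinity. Writing $\omega_\tau = \omega_{A(\tau),\tau} + dd^c\phi$, the equation becomes
\begin{equation*}
(\omega_{A(\tau),\tau} + dd^c\phi)^n = e^{F}\,\omega_{A(\tau),\tau}^n,
\end{equation*}
where $e^F\,\omega_{A(\tau),\tau}^n = \tau^n\tfrac{1}{2}(\sqrt{-1})^{n^2}\Omega_X\wedge\bar\Omega_X$. By Lemma \ref{kahlergluing} applied to the form $\omega_{A(\tau),\tau}-\tau dd^c v_{A(\tau)}$ (which decays like $O(|S|_{h_M}^{\delta_0})$) together with the definition of $v_{A(\tau)}$ outside a compact set, one gets $F = O(|S|_{h_M}^{\delta_0})$ at infinity together with all covariant derivatives. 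Moreover the calibration in Lemma \ref{l:fix the constant A} is precisely the integral compatibility $\int_X(e^F - 1)\,\omega_{A(\tau),\tau}^n = 0$ needed to run the continuity method.

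I would set up the path $(\omega_{A(\tau),\tau} + dd^c\phi_t)^n = e^{tF}\omega_{A(\tau),\tau}^n$ for $t \in [0,1]$ in the Banach space of functions $\phi$ whose norm controls $\phi$ and its derivatives with appropriate weights. Openness at $t$ is standard once one has invertibility of the linearized operator $\Delta_{\omega_t}$ on a weighted space (available because the model geometry near infinity is of Calabi type and the operator is Fredholm on suitable weighted Hölder spaces). The work is in closedness, which requires a priori estimates uniform in $t$.

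For the estimates I would proceed in the order $C^0$, then $C^2$, then higher regularity, then decay. The $C^0$ bound is the crucial step: one uses Moser iteration on $\phi$ and $-\phi$ with respect to the background volume form, exploiting (i) the Sobolev inequality on the asymptotically Calabi end, which follows from the explicit model structure, and (ii) the compatibility from Lemma \ref{l:fix the constant A}, which makes $e^{tF}-1$ integrate to zero. Once $\|\phi\|_{L^\infty} \le C$, the Yau trick applied to $\log\Tr_{\omega_{A(\tau),\tau}}(\omega_t) - B\phi$ with a large constant $B$ (using the lower bound on the bisectional curvature of $\omega_{A(\tau),\tau}$ at infinity, which is bounded because the model curvatures decay) gives a uniform $C^2$ estimate, and then Evans–Krylov plus local Schauder gives the full $C^{k,\alpha}$ bound on every compact set. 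This produces a solution $\phi \in C^\infty(X) \cap L^\infty(X)$.

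The decay of $\phi$ is obtained by a barrier argument. Since $F = O(|S|_{h_M}^{\delta_0})$ and $\omega_t$ is uniformly equivalent to $\omega_{A(\tau),\tau}$ by the estimates above, the equation gives $\Delta_{\omega_t}\phi = \Tr_{\omega_t}(\omega_{A(\tau),\tau} - \omega_t) + n$, and expanding the Monge–Ampère equation around $\omega_{A(\tau),\tau}$ shows that $\Delta_{\omega_t}\phi$ inherits the same $O(|S|_{h_M}^{\delta'})$ decay for some $\delta' > 0$. A barrier of the form $\pm C\tilde v^{-N}$ (or equivalently $\pm C e^{-\delta_0(-\log|S|^2_{h_M})^{1/2}}$ times a polynomial in $\tilde v$), combined with the fact that $\Delta_{\omega_t}\tilde v^{\alpha}$ is comparable to $-\tilde v^{\alpha - 1/n}$ for small $\alpha > 0$ on the Calabi model, yields the pointwise decay $|\phi| = O(e^{-\delta_0(-\log|S|^2_{h_M})^{1/2}})$ via the maximum principle applied on sublevel sets $\{\tilde v \le R\}$ and passing $R \to \infty$. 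Higher derivative decay follows by standard scaled Schauder estimates on annular regions near the divisor, using again the uniform equivalence of $\omega_t$ with the model. The main obstacle is the $C^0$ estimate, because the volume growth of the end is fractional and the asymptotic structure is not conical, so one must carefully match the Moser iteration with the integral compatibility and the Sobolev constant coming from the explicit Calabi model rather than appealing to a generic asymptotic form.
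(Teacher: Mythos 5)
You have correctly identified the reduction to a Monge--Amp\`ere equation with rapidly decaying right-hand side and, importantly, the role of Lemma \ref{l:fix the constant A} as the zero-mass normalization; this matches the logic of the paper, whose proof of Theorem \ref{t:TY} is essentially a citation of \cite[Theorem 1.1]{TianYau} and \cite[Propositions 2.4 and 2.9]{Hein} together with the observation that the choice $A=A(\tau)$ is what makes direct methods applicable. However, your continuity path $(\omega_{A(\tau),\tau}+dd^c\phi_t)^n=e^{tF}\omega_{A(\tau),\tau}^n$ does not work. If $\phi_t$ decays together with two derivatives, then Stokes' theorem forces $\int_X(e^{tF}-1)\,\omega_{A(\tau),\tau}^n=0$ (the boundary terms over $\{z=s\}$ vanish as $s\to\infty$ because the cross-sections have polynomial volume). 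But $m(t)\equiv\int_X(e^{tF}-1)\,\omega_{A(\tau),\tau}^n$ is strictly convex in $t$ (its second derivative is $\int_X F^2e^{tF}\omega_{A(\tau),\tau}^n>0$ unless $F\equiv 0$) and vanishes at $t=0$ and $t=1$, hence is strictly negative for all $t\in(0,1)$. So the compatibility you rely on fails at every interior $t$; you cannot restore it by additive normalizing constants $c_t$ because the volume is infinite, and nonzero mass on an end of volume growth $r^{2n/(n+1)}<r^2$ forces the potential to grow like $(-\log|S|^2_{h_M})^{1/n}$ (this is exactly the phenomenon described in the paper for $A\neq A(\tau)$), so closedness in any space of bounded or decaying potentials cannot hold along your path. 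This is precisely why Tian--Yau and Hein solve Dirichlet problems on a compact exhaustion and pass to the limit, with the zero-mass condition entering only in the uniform $C^0$ estimate; note also that for volume growth exponent $\beta=2n/(n+1)<2$ the Euclidean-exponent Sobolev inequality fails, so the ``standard Moser iteration'' you invoke is itself the hard step (this is the content of the SOB$(\beta)$ framework of \cite{Hein}), and openness via weighted Fredholm theory for the evolving metrics is not routine in this geometry either.

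The decay step is also wrong as stated. The end is parabolic (volume growth $r^{2n/(n+1)}<r^2$), so there exists \emph{no} positive supersolution of the Laplace equation tending to zero at infinity: a positive superharmonic function on a parabolic end attains its infimum on the compact boundary of the end. In particular your proposed barriers are not barriers. Concretely, for radial functions of $\tilde v\approx-\log|S|^2_{h_M}$ on the Calabi model one has $\Delta\,\tilde v^{\alpha}\sim\alpha(n\alpha-1)\,\tilde v^{\alpha-1-\frac1n}$, so $\tilde v^{\alpha}$ is superharmonic only for $0<\alpha<\frac1n$ (and these functions \emph{grow}), while $\tilde v^{-N}$ and $e^{-\delta_0\tilde v^{1/2}}$ are both subharmonic near infinity; your asserted comparability $\Delta\tilde v^{\alpha}\sim-\tilde v^{\alpha-\frac1n}$ has the wrong exponent, and the parenthetical identification of $\tilde v^{-N}$ with $e^{-\delta_0\tilde v^{1/2}}$ confuses polynomial with exponential decay. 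The correct mechanism for the decay claimed in Theorem \ref{t:TY} is the integration/energy argument of \cite[Proposition 2.9]{Hein} (a differential inequality for $\int_{\{z>s\}}|\nabla\phi|^2$ yielding exponential decay of the energy, then local elliptic estimates for pointwise and derivative decay); the paper reuses exactly this argument in the proofs of Theorem \ref{t:uniqueness} and Proposition \ref{p:holo}. The fact that $\phi$ decays only like $e^{-\delta_0(-\log|S|^2_{h_M})^{1/2}}$ even though $F$ decays like a power of $|S|_{h_M}$ reflects that the rate is dictated by the decaying harmonic modes of the end, not by the right-hand side --- another indication that a pointwise barrier matched to the decay of $F$ is not the right tool here.
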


This follows from \cite[Theorem 1.1]{TianYau} and \cite[Proposition 2.4, Proposition 2.9(ia) and (ii)]{Hein}. The correct choice of $A = A(\tau)$ is crucial because otherwise the relevant Monge-Amp\`ere equation cannot be solved by direct methods. Once the equation has been solved for $A = A(\tau)$, it follows that a solution exists for all $A$ by adding $v_{A(\tau),\tau} - v_{A,\tau}$ to the potential, but $v_{A(\tau),\tau} - v_{A,\tau}$ is comparable to $(A(\tau)-A)\tau (-{\log}\,|S|_{h_M}^2)^{\frac{1}{n}}$ at infinity, which is not even uniformly bounded for $A \neq A(\tau)$.

\begin{remark}
\label{decay rate}
The decay rate of the complex structure  is of order $O(e^{-(\frac{1}{2}-\varepsilon)z^n})$ for all $\varepsilon>0$. This is much faster than the decay rate of the metric, which is in general only of order $O(e^{-\underline\delta z^{n/2}})$. 
\end{remark}

\begin{theorem}
\label{t:uniqueness}
Suppose we have a K\"ahler metric $\omega$ on $X$ such that $\omega^n=C\Omega_X\wedge\bar\Omega_X$ for some $C>0$ and such that there exist $\tau>0$ and $A\in \mathbb R$ such that  $|\nabla^l_{\omega}(\omega-\tau dd^c v_A)|_\omega=O(e^{-\delta (-{\log\,} |S|^2_{h})^{\frac{1}{2}}})$ for some $\delta>0$ and for all $l\geq 0$. Then $\omega=\omega_{\tau}$, the metric constructed in Theorem \ref{t:TY} in the class $\mathfrak{k} = [\omega]\in H^2_{c,+}(X).$
\end{theorem}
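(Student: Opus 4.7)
The plan is to reduce this to the standard Monge--Amp\`ere uniqueness for potentials that decay at infinity. The argument is essentially the one behind \cite[Proposition 2.9(ii)]{Hein}, modified to the present weakly Fano setting.

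First I would show that the parameter $A$ in the hypothesis is forced to equal $A(\tau)$ from Lemma \ref{l:fix the constant A}. The asymptotic expansion $\omega \sim \tau dd^c v_A$ combined with the Monge--Amp\`ere equation $\omega^n = C\,\Omega_X \wedge \bar\Omega_X$ forces $C = \tau^n \cdot \frac{1}{2}(\sqrt{-1})^{n^2}$, so $\omega^n = \omega_\tau^n$. Using the integration-from-infinity trick of \cite[Lemma 3.7]{HSVZ} one extracts a decaying potential $u$ for $\omega - \omega_{A, \tau}$, and then Stokes' theorem on the exhaustion $\{|S|_{h_M} \geq \varepsilon\}$ gives $\int_X (\omega^n - \omega_{A, \tau}^n) = 0$ in the limit $\varepsilon \to 0$. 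The corresponding identity with $A$ replaced by $A(\tau)$ is Lemma \ref{l:fix the constant A} applied to $\omega_\tau$. Subtracting and invoking the linear dependence $\int_X(\omega_{A, \tau}^n - \omega_{0, \tau}^n) = \lambda A \tau^n \int_D \omega_D^{n-1}$ from that lemma's proof forces $A = A(\tau)$.

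Next I would produce a real function $\psi$ on $X$ with $dd^c\psi = \omega - \omega_\tau$ and with $|\nabla_{\omega_\tau}^l \psi|_{\omega_\tau}$ decaying exponentially in $z = (-\log|S|_{h_M}^2)^{1/2}$ for every $l \geq 0$. Both $\omega$ and $\omega_\tau$ have the same asymptotic form $\tau dd^c v_{A(\tau)}$, so $\omega - \omega_\tau$ is a closed, exact $(1,1)$-form that decays exponentially together with all its derivatives. On the cylindrical end of $X$ (modelled on a circle bundle over $D$), a weighted Hodge/Poincar\'e argument produces an exponentially decaying real $1$-form $\sigma$ with $d\sigma = \omega - \omega_\tau$ via integration from infinity; the $\bar\partial$-Poincar\'e lemma on this product-type end then yields a decaying function whose $dd^c$ reproduces $\omega - \omega_\tau$. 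Extending by a cutoff and correcting in the interior by an exact term yields the global potential $\psi$. This weighted Hodge/$dd^c$-lemma step is the main technical obstacle; an alternative is to invoke directly the Monge--Amp\`ere uniqueness in \cite[Proposition 2.9(ii)]{Hein} in a form suitable for the present weakly Fano setting.

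Finally, the identity $\omega^n = \omega_\tau^n$ becomes
\begin{equation*}
0 \;=\; dd^c \psi \wedge T, \qquad T \;\equiv\; \sum_{k=0}^{n-1} \omega^k \wedge \omega_\tau^{n-1-k},
\end{equation*}
where $T$ is a strictly positive smooth $(n-1, n-1)$-form on $X$. Multiplying by $\psi$ and integrating by parts, with the boundary terms at infinity vanishing thanks to the exponential decay of $\psi$ together with the at-most polynomial growth of $T$, yields
\begin{equation*}
0 \;=\; \int_X d\psi \wedge d^c \psi \wedge T.
\end{equation*}
The integrand is a pointwise nonnegative volume form that vanishes precisely where $d\psi = 0$, so $\psi$ is locally constant on $X$. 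Since $\psi$ decays at infinity, $\psi \equiv 0$, and hence $\omega = \omega_\tau$.
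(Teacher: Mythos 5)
Your overall skeleton agrees with the paper's: both arguments first pin down $A=A(\tau)$ by the Stokes computation behind Lemma \ref{l:fix the constant A}, then seek a globally defined, exponentially decaying potential $\psi$ with $dd^c\psi=\omega-\omega_\tau$, and finish with the same integration by parts against $T=\sum\omega^k\wedge\omega_\tau^{n-1-k}$. The first and last steps of your proposal are fine.

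The genuine gap is the middle step, which you yourself flag as ``the main technical obstacle'' and then do not close. Integration from infinity only yields a decaying $1$-form $\sigma$ with $d\sigma=\omega-\omega_\tau$; passing from $\sigma$ to a decaying \emph{function} is exactly the hard point, and neither of your two suggestions resolves it. A ``weighted Hodge/$\overline{\partial}$-Poincar\'e lemma on the product-type end'' is not available off the shelf here: the end is a collapsing Calabi-type end (the model metric is inhomogeneous, with $V\sim z$), not a genuine cylinder, and the mismatch between the decay rate of the metric ($O(e^{-\underline{\delta}z})$) and of the complex structure is precisely why the paper avoids weighted Fredholm-type arguments throughout; moreover a priori the Dolbeault cohomology $H^{0,1}$ of the end region, and the problem of extending an end-local potential to a global one on the noncompact $X$, are further obstructions you do not address. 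Your fallback of quoting \cite[Proposition 2.9(ii)]{Hein} is circular: that uniqueness statement compares two metrics already written as $\omega_0+dd^c\varphi_i$ with decaying potentials, so it presupposes the potential whose existence is at issue. The paper instead closes this step by a global argument: it solves the Poisson equation $\overline{\partial}^*\overline{\partial}u=\overline{\partial}^*\sigma^{0,1}$ on all of $X$ using \cite[Theorem 1.5]{Hein2} (the compatibility $\int_X(\overline{\partial}^*\sigma^{0,1})\,\omega_\tau^n=0$ holds automatically), upgrades the bounded solution to exponential decay via the integration argument of \cite[Proposition 2.9(ia)]{Hein}, and then shows the residual form $\gamma=\sigma^{0,1}-\overline{\partial}u$ is a decaying harmonic $(0,1)$-form, hence vanishes by the Bochner formula (this uses Ricci-flatness of $\omega_\tau$) and the maximum principle; only then does $\omega-\omega_\tau=dd^c\bigl(\sqrt{-1}(\bar u-u)\bigr)$ follow. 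Without an argument of this kind, your proof is incomplete at its decisive step.
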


\begin{proof}
By rescaling $\omega$ if necessary, we may assume that $C=1$. As in the proof of Proposition \ref{l:11 form decay}, we can see that  $\omega=\tau dd^c v_A+d\sigma$ outside a compact set, where $\sigma$ is a smooth real-valued 1-form such that $|\nabla^l_{\omega_m}\sigma|_{\omega_m} =O(e^{-\delta (-{\log\,} |S|^2_{h})^{\frac{1}{2}}})$ for all $l\geq 0$. In particular, $[\omega] \in {\rm im}(H^2_c(X) \to H^2(X))$. Thus, if we let $\mathfrak{k}$ denote the de Rham cohomology class of $\omega$ in $H^2(X)$, then we have that $\mathfrak{k} \in H^2_{c,+}(X)$. Consequently, we can apply Theorem \ref{t:TY} to obtain a complete Calabi-Yau metric $\omega_{\tau}$ on $X$ with $[\omega_\tau] = \mathfrak{k}$. By construction, we have that $\omega^n=\omega_\tau^n$.
Then, as in the proof of Lemma \ref{l:fix the constant A}, we can see that $A=A(\tau)$, so we know that $|\nabla^l_{\omega_\tau}(\omega-\omega_\tau)|_{\omega_\tau}=O(e^{-\delta_1 (-{\log\,} |S|^2_{h})^{\frac{1}{2}}})$ for some $\delta_1>0$. 
 
Next,  we solve the Poisson equation $\bp^*\bp u=\bp^*\sigma^{0,1}$ with respect to $\omega_\tau$. Note that $\int_{X} (\bp^*\sigma^{0,1})\, \omega_{\tau}^n=0$ by definition, so we can apply \cite[Theorem 1.5]{Hein2} to conclude the existence of a $C^\infty$ solution $u$ which is uniformly bounded and satisfies $\int_X |du|^2_{\omega_\tau}\omega_\tau^n<\infty$. Moreover, by the integration argument in the proof of \cite[Proposition 2.9(ia)]{Hein}, after subtracting a constant from $u$ one actually has the asymptotics $|\nabla^l_{\omega_\tau}u|_{\omega_\tau}=O(e^{-\delta_2 (-{\log\,} |S|^2_{h})^{\frac{1}{2}}})$ for some $\delta_2>0$ and all $l \geq 0$. Let $\gamma \equiv \sigma^{0,1}-\bp u$. By construction, $\bp\gamma =\bp^*\gamma =0$ with respect to $\omega_\tau$. Thus, by the Bochner formula, $\Delta|\gamma|^2\geq0$. Since $\gamma$ tends to zero at infinity,  it must vanish identically. It follows that  $\omega-\omega_\tau=dd^c(\sqrt{-1}(\bar u-u))$. Finally, one can integrate by parts to conclude that $u\equiv0$.
\end{proof}
 
The second statement in Theorem~\ref{t:main1} follows from Theorem \ref{t:uniqueness} (applied to $M=(\overline X',\overline{I})$) and Proposition \ref{l:11 form decay}.

\begin{proof}[Proof of Corollary~\ref{c:main1}] By \cite[Theorem~1.1]{Takayama}, the weak Fano manifold $\overline{X} = X \cup D$ is simply connected. Since $-K_{\overline{X}}$ is big, $D$ does not consist of a pencil, so by \cite[Corollary~2.10]{Nori}, $\pi_1(X) = \pi_1( -K_{\overline{X}}^{\times})$, where $-K_{\overline{X}}^{\times} =-K_{\overline{X}} \setminus {\mathbf{0}}_{-K_{\overline{X}}}$. Letting $N$ denote the unit circle bundle of $-K_{\overline{X}}$, the homotopy sequence of the fibration $S^1 \rightarrow N \rightarrow \overline{X}$ yields 
\begin{align}
\pi_2(\overline{X}) \rightarrow \ZZ \rightarrow \pi_1(X) \rightarrow 0, 
\end{align}
since $\overline{X}$ is simply-connected, and since $- K_{\overline{X}}^{\times}$ deformation retracts onto $N$, so $\pi_1(N) \cong \pi_1( -K_{\overline{X}}^{\times}) \cong \pi_1(X)$. 
By \cite[Proposition~2]{Shimada}, the first mapping factors as the Hurewicz projection from $\pi_2(\overline{X}) \rightarrow H_2(\overline{X}, \ZZ)$ composed with the mapping $\delta: H_2(\overline{X}, \ZZ) \rightarrow \ZZ$ given by the intersection paring with $[D] \in H_{n-2}(\overline{X}, \ZZ)$, that is 
\begin{align}
\delta (\Sigma) = \int_{\Sigma} c_1([D]).
\end{align}
By Poincar\'e duality, this mapping is nontrivial, so the image is an infinite subgroup $k \ZZ \subset \ZZ$ , and consequently $\pi_1(X) = \ZZ/k\ZZ$ is cyclic, with $k$ equal to the index of $\overline{X}$. 

The bound on the degree is a consequence of the following results. It is a classical result that the degree of a weak del Pezzo surface satisfies $1 \leq d \leq 9$; see for example \cite[Chapter~8]{Dolgachev}.  A bound on the degree of a weak Fano threefold is proved in \cite[Corollary~1.3]{KMMT}. Finally, Birkar proved that in any dimension there is an a priori bound on the degree of a weak Fano manifold; see~\cite[Theorem~1.1]{Birkar}.
\end{proof}

\section{ALH$^*$ gravitational instantons}
\label{s:ALHstar}

The $3$-dimensional Heisenberg group is 
\begin{equation}
H(1,\dR) \equiv\left\{\begin{bmatrix}
1 & x &t \\
0 & 1 &  y\\
0 & 0 & 1
\end{bmatrix}: \ x,y,t\in\dR\right\}.
\end{equation}
Let $(\mathbb{T}^2, g_{c,\tau})$ be the flat $2$-torus corresponding to the lattice $c(\ZZ \oplus \ZZ \tau) \subset \CC$ with $\tau \in \mathbb{H}$ and $c > 0$, where $\mathbb{H}$ is the upper half-plane in $\CC$. Write $\tau_1 = \Rea(\tau)$ and $\tau_2 = \Ima(\tau)$. For $b \in \mathbb{Z}_+$, let $\Nil^3_{b, c, \tau}$ be the nilmanifold corresponding to the quotient of the Heisenberg group by the subgroup generated by 
\begin{align}
(x,y,t) &\mapsto (x + c, y, t + c y),\\
(x,y,t) &\mapsto (x + c \tau_1, y+ c \tau_2, t + c \tau_1 y ),\\
(x,y,t) &\mapsto (x, y, t +  \tau_2 b^{-1} c^2  ).
\end{align}

\begin{definition}
For $b \in \ZZ_+, c > 0, \tau \in \mathbb{H}$, and $R > 0$, the $\ALH^*$ model space is 
\begin{align}
\fM_{b, c, \tau}(R) \equiv (R, \infty) \times \Nil^3_{b,\tau,c}
\end{align}
together with the hyperk\"ahler Riemannian metric 
\begin{align}
g^{\fM} \equiv  V ( dx^2 + dy^2 + dz^2) + V^{-1}  \theta^2,
\end{align}
where $z$ is the coordinate on $(R, \infty)$, $V\equiv 2\pi b c^{-2} \tau_2^{-1} z $, and $\theta \equiv 2\pi b c^{-2} \tau_2^{-1}(dt - x dy)$.
\end{definition}

Choose the following orthonormal frame for $T^*\fM$:
\begin{align}
\{e_1, e_2, e_3, e_4\} = \{V^{\frac12} dx,  V^{\frac12} dy,  V^{\frac12} dz, V^{-\frac12} \theta \}.
\end{align}
Define three almost-complex structures on $T^*\fM$ by 
\begin{align}
I_{\fM}^* ( e_1) &= e_2, \quad I_{\fM}^*(e_3) = e_4, \\
\label{Jstar}
J_{\fM}^* (e_1) &= e_4, \quad J_{\fM}^* (e_2) = e_3, \\
K_{\fM}^* (e_2) &= e_4, \quad K_{\fM}^* (e_3) = e_1,
\end{align}
which are dual to almost-complex structures $I_{\fM}, J_{\fM}, K_{\fM}$ on $T\fM$, respectively.
Denote the K\"ahler forms associated to $I_{\fM}^*, J_{\fM}^*, K_{\fM}^*$
by $ \omega_1^{\fM}, \omega_2^{\fM}, \omega_3^{\fM}$, respectively. These are explicitly given by
\begin{align}
\omega_1^{\fM} & = dz \wedge \theta + V dx \wedge dy, \\
\omega_2^{\fM} &= dx \wedge \theta + V dy \wedge dz, \\
\omega_3^{\fM} &= dy \wedge \theta + V dz \wedge dx. 
\end{align}
Taken together, this data defines a hyperk\"ahler structure on $\fM$, which we denote as $(\fM, g^{\fM}, \bm{\omega}^{\fM})$. This structure can equivalently also be specified as $(\fM, g^{\fM}, I_{\fM}, J_{\fM}, K_{\fM})$. 

\begin{definition}[$\ALH^*$ gravitational instanton]
\label{d:ALHstarmetric}
A hyperk\"ahler structure $(X,g,\bm{\omega})$ on a $4$-manifold $X$ is called an $\ALH^*$ gravitational instanton with parameters $b \in \ZZ_+$, $c > 0$, and $\tau \in \mathbb{H}$, if there exist $\delta,R>0$, a compact subset $X_R\subset X$, an $\ALH^*$ model space $(\fM_{b, c, \tau}(R), g^{\fM})$, and a diffeomorphism 
\begin{equation}
\Phi:  \fM_{b, c,\tau}(R) \rightarrow X \setminus X_R
\end{equation} 
such that for all $k \in \dN_0$,
\begin{align}
\label{metricdecay}
|\nabla_{g^{\fM}}^k(\Phi^*g-g^{\fM})|_{g_{\fM}}&=O(e^{-\delta  z}), \\
\label{omegadecay}
|\nabla_{g^{\fM}}^k(\Phi^*\omega_i - \omega^{\fM}_i)|_{g_{\fM}}&=O(e^{-\delta  z}), 
\ i = 1,2,3, 
\end{align}
as $z \to \infty$.
\end{definition}

According to \cite[Proposition 3.1]{HSVZ}, any ALH$^*$ model space $\fM$ together with its complex structure $I_{\fM}$ is holomorphically isometric to a \emph{Calabi model space} up to rescaling. This means the following. We can view the flat torus $(\mathbb{T}^2,g_{c,\tau})$ as an elliptic curve $D$. Then there exists an ample line bundle $L \to D$ of degree $b$, together with a hermitian metric $h_L$ whose curvature form defines the flat metric
\begin{align}
2\pi b c^{-2}\tau_2^{-1}g_{c,\tau}
\end{align}
on $D$, such that the underlying complex manifold $(\fM_{b,c,\tau}(R), I_{\fM})$ can be identified with the open set
\begin{align}
\mathcal{C} \equiv \{\xi \in L: 0<|\xi|_{h_L}<e^{-\frac{1}{2}z_0^2}\}
\end{align}
for some $z_0 > 0$. Moreover, the K\"ahler form and the holomorphic $2$-form on $\fM$ are respectively given by $\omega_1 = \mu \omega_{\mathcal{C}}$ for some $\mu > 0$ and $\omega_2 + \sqrt{-1}\omega_3 = \nu\Omega_{\mathcal{C}}$ for some $\nu \in \mathbb{C}^*$, where 
\begin{align}
\omega_{\mathcal C}\equiv \frac{2}{3} \sqrt{-1} \partial_{\mathcal C} \overline{\partial}_{\mathcal C}(-{\log |\xi|^2_{h_L}})^{\frac{3}{2}}
\end{align}
and $\Omega_{\mathcal C}$ is a holomorphic volume form which has a simple pole along the zero section $\textbf{0}_L$ and is invariant under the natural $\mathbb C^*$-action on $L$. In addition, we have that
\begin{align}
z=(-{\log |\xi|^2_{h_L}})^{\frac12},
\end{align}
and this is the $\omega_{\mathcal{C}}$-moment map for the natural $S^1$-action on $L$. We note here that $h_L$ is unique only up to scaling, and the scale of $h_L$ is an important free parameter (see Lemma \ref{l:fix the constant A}).

Thanks to this identification, an $\ALH^*$ gravitational instanton is the same as  a complete hyperk\"ahler 4-manifold which is asymptotically Calabi in the sense of \cite[Definition 4.1]{HSVZ}.
 The proof of part (i) of Theorem~\ref{t:main2} follows from this equivalence and Theorem~\ref{t:main1}.

\subsection{Compactification to rational elliptic surfaces}
In this subsection, we prove part~(ii) of Theorem~\ref{t:main2}. 
First, we recall some basic facts about the lowest nontrivial eigenvalue of the Laplacian on $(\mathbb{T}^2,g_{c,\tau})$. The $\ZZ^2$-action on $\CC$ is generated by 
\begin{align}
(x,y) \mapsto (x + c, y), \quad (x,y) \mapsto (x + c\tau_1, y+ c\tau_2). 
\end{align}
The eigenfunctions of the Laplacian on $(\mathbb{T}^2,g_{c,\tau})$ are given by
\begin{align}
\phi_{m,n}(x,y) = e^{2 \pi i m c^{-1} \big(x - \tau_1\tau_2^{-1} y \big)} e^{2\pi i n c^{-1}\tau_2^{-1}y}
\end{align}
for $(m,n)\in\ZZ^2$. The eigenvalues are 
\begin{align}
\lambda_{m,n} = 4\pi^2 c^{-2}\{   m^2 + \tau_2^{-2} (n - m \tau_1)^2 \}.
\end{align}
Using the PSL$(2,\ZZ)$-action on $\mathbb{H}$, we can assume without loss of generality that $|\tau| \geq 1$. Then the lowest nontrivial eigenvalue is given by
$\lambda_1 =  \lambda_{0,1} = 4 \pi^2 c^{-2} \tau_2^{-2}$, with eigenfunction $\phi_{0,1}$. 

Next, we establish an almost optimal decay rate for any $\ALH^*$ gravitational instanton.

\begin{proposition}
\label{t:ALHstarorder}
Let $(X,g,\bm{\omega})$ be an $\ALH^*$ gravitational instanton with parameters $b \in \ZZ_+$, $c >0$, $\tau \in \HH$. Then, in suitable $\ALH^*$ coordinates, \eqref{metricdecay}--\eqref{omegadecay} hold with $\delta = \sqrt{\lambda_1}- \epsilon$ for any $\epsilon > 0$.
\end{proposition}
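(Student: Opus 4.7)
The plan is to combine the compactification and uniqueness results already established (Theorem \ref{t:main1} and Theorem \ref{t:uniqueness}) with a spectral bootstrap on the Calabi model end. The starting observation is that by Remark \ref{decay rate}, in the coordinates adapted to the Tian-Yau identification, the complex structure difference $I - I_{\mathcal{C}}$ is super-exponentially small, of order $O(e^{-(\frac12 - \varepsilon)z^2})$. So the genuine decay rate is controlled entirely by the Kähler forms (equivalently, by the Kähler potential for $\omega_1$ with respect to the Calabi background), not by $I$. Moreover, because both $\Omega$ and $\Omega_{\mathcal{C}}$ extend to meromorphic 2-forms on $\overline{X}'$ with simple poles along $D'$ and matching residues, the holomorphic ratio $\Omega/\Omega_{\mathcal{C}}$ equals $1$ on $D'$, so $\Omega - \Omega_{\mathcal{C}}$ vanishes to first order at $D'$ and is therefore $O(|w_2|) = O(e^{-z^2/2})$ as well.

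Next, writing $\omega_1 = \omega_{\mathcal{C}} + dd^c\phi$ on the end (modulo super-exponentially small complex-structure corrections), the hyperkähler equation $\omega_1^2 = \frac{1}{2}\Omega\wedge\bar{\Omega}$ combined with $\omega_{\mathcal{C}}^2 = \frac{1}{2}\Omega_{\mathcal{C}}\wedge\bar{\Omega}_{\mathcal{C}}$ yields a complex Monge-Ampère equation of the form
\begin{equation*}
\Delta_{\omega_{\mathcal{C}}}\phi = -\frac{(dd^c\phi)^2}{\omega_{\mathcal{C}}^2} + F,
\end{equation*}
where $F = O(e^{-z^2/2})$ is super-exponentially small. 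Theorem \ref{t:TY} provides an initial decay $\phi = O(e^{-\delta z})$ for some (small) $\delta>0$.

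The key spectral input is the separation of variables for $\Delta_{\omega_{\mathcal{C}}}$ on the model end. Because the Laplacian commutes with the circle action generated by $\partial_t$, one decomposes $\phi$ into Fourier modes in $t$. On the $k$-th weight space with $k\neq 0$ the Laplacian acquires a mass term $Vk^2 \sim zk^2$, so such modes decay at the super-exponential rate $e^{-c|k|z^2}$. On the $S^1$-invariant sector the Laplacian reduces to $V^{-1}(\Delta_{T^2} + \partial_z^2)$; a mode $\psi(x,y)e^{-\mu z}$ is harmonic iff $\mu^2$ is an eigenvalue of $-\Delta_{T^2}$ on $(\mathbb{T}^2, g_{c,\tau})$, and the smallest positive such eigenvalue is exactly $\lambda_1 = 4\pi^2 c^{-2}\tau_2^{-2}$. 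The $S^1$-invariant zero modes $a+bz$ are either pure gauge (constant) or inconsistent with the a priori exponential decay of $\omega - \omega_{\mathcal{C}}$ (the $bz$ mode would contribute $dd^c(bz) = O(z^{-1})$ to $\omega$). Hence the slowest admissible decay rate is $e^{-\sqrt{\lambda_1}z}$.

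The bootstrap then runs as follows: starting from $\phi = O(e^{-\delta z})$ in $C^k$, the Monge-Ampère equation gives $\Delta_{\omega_{\mathcal{C}}}\phi = O(e^{-2\delta z}) + O(e^{-z^2/2})$. Mode-by-mode ODE analysis (or weighted Schauder estimates along the cylindrical $z$-direction) improves the decay to $O(e^{-(\min(2\delta,\,\sqrt{\lambda_1})-\varepsilon)z})$; iterating saturates at $O(e^{-(\sqrt{\lambda_1}-\varepsilon)z})$. This decay for $\phi$ translates directly into the stated decay of $\omega_1, \omega_2, \omega_3$, and hence of $g$ (using also the super-exponential control on $I - I_{\mathcal{C}}$). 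The principal obstacle I anticipate is a careful bookkeeping of gauge freedom: one must verify that the potential zero-mode contributions (from constants on the nilmanifold fibers and from Heisenberg translations) can be absorbed into a modification of the ALH$^*$ diffeomorphism $\Phi$ by pre-composing with an isometry of the model, without disturbing the parameters $b, c, \tau$, so that the bootstrap saturates cleanly at the spectral rate $\sqrt{\lambda_1}$.
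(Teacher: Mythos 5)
Your proposal is correct and follows essentially the same route as the paper: identify the instanton as a generalized Tian--Yau metric via Theorem \ref{t:main1}, so that $I-I_{\mathcal C}$ and $\Omega-\Omega_{\mathcal C}$ are super-exponentially small and only the K\"ahler potential matters, expand the Monge--Amp\`ere equation to get $\Delta_{g_{\mathcal C}}u=O(e^{-2\epsilon z})$, and bootstrap the decay up to the spectral threshold $\sqrt{\lambda_1}$ coming from the torus Laplacian. The only difference is presentational: where you carry out the Fourier/separation-of-variables analysis on the Calabi end by hand (including the zero-mode and gauge bookkeeping), the paper simply cites \cite[Propositions 4.10 and 4.12]{HSVZ} for exactly this harmonic and Poisson analysis, and \cite[Proposition 3.4]{HSVZ} for the super-exponential estimates on $I$ and $\Omega$ in the adapted coordinates.
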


\begin{proof}
By Theorem \ref{t:main1}, $(X,g,I)$ arises as a Tian-Yau metric on a weak del Pezzo surface $M$ minus an anticanonical divisor $D$. Thus, we can use the coordinate system $\Phi$ from \cite[Proposition~3.4]{HSVZ}. 
The Tian-Yau metric is of the form $\omega = \omega_0 + \i \p \op u$ with 
$|\nabla_{g_{\mathcal{C}}}^k u|_{g_{\mathcal{C}}} = O(e^{-\epsilon z})$ as $z \to \infty$ for some $\epsilon > 0$ and all $k \geq 0$. By \cite[Proposition~3.4]{HSVZ}, the background K\"ahler form satisfies the asymptotics
\begin{align}
|\nabla^k_{g_{\mathcal{C}}}(\omega_0 - \omega_{\mathcal{C}})|_{g_{\mathcal{C}}} = O(e^{-\epsilon z^2})
\end{align}
as $z \to \infty$, for some $\epsilon > 0$ and all $k \geq 0$. 
Expanding the Calabi-Yau equation 
\begin{align}
\label{eq:CY}
( \omega_0 + \i \p \op u)^2 = \frac{1}{2} \Omega \wedge \overline{\Omega}
\end{align}
yields that $\Delta_{g_{\mathcal{C}}} u = O(e^{-2\epsilon z})$ as $z \to \infty$. Assume that $2\epsilon < \sqrt{\lambda_1}$. Then the arguments of \cite[Section~4]{HSVZ}
allow us to conclude that $u=O(e^{-2\underline{\epsilon}z})$ for any $\underline{\epsilon}\in(0,\epsilon)$. Indeed, notice that \cite[Proposition~4.12]{HSVZ} holds with $\underline{\delta} = \sqrt{\lambda_1}$, as is clear from the proof. Using this, we can find a function $u_{-2\underline{\epsilon}}$ defined on $\{z > R_1\}$ for some $R_1 > R$ such that $u_{-2\underline{\epsilon}} = O(e^{-2\underline{\epsilon} z})$ for any $\underline{\epsilon}\in(0,\epsilon)$ and $\Delta_{g_\mathcal{C}} u_{-2\underline{\epsilon}} = \Delta_{g_\mathcal{C}} u$. The function $u - u_{-2\underline{\epsilon}}$ is then harmonic with respect to $g_{\mathcal{C}}$ and is also $o(1)$ as $z \to \infty$, so from \cite[Proposition 4.10]{HSVZ} (part (2) of which is easily seen to hold with $\underline{\delta}/2$ replaced by any number greater than $-\sqrt{\lambda_1} $), we have that
\begin{align}
u - u_{-2\underline{\epsilon}} = O(e^{(- \sqrt{\lambda_1} + \epsilon') z})
\end{align}
for any $\epsilon'>0$, which implies that $u =  O(e^{-2\underline{\epsilon} z})$ for any $\underline{\epsilon}\in(0,\epsilon)$. We can iterate this argument together with standard local derivative estimates for the equation \eqref{eq:CY} to get 
\begin{align}
|\nabla^k_{g_{\mathcal{C}}}u|_{g_{\mathcal{C}}} =  O(e^{(-\sqrt{\lambda_1} + \epsilon) z})
\end{align}
as $z \to \infty$, for all $k \geq 0$ and $\epsilon>0$. This implies \eqref{omegadecay} for $i =1$ with $\delta = \sqrt{\lambda_1} - \epsilon$ for any $\epsilon > 0$.

Also, from Theorem~\ref{t:main1} and 
\cite[Proposition 3.4(a)]{HSVZ}, we have that 
\begin{align}
\label{Idecay}
|\nabla^k_{g_{\fM}}( \Phi^*I - I_{\fM})|_{g_{\fM}} = O(e^{-\epsilon' z^2})
\end{align}
for some $\epsilon' > 0$ and any $k \geq 0$ as $z \to \infty$.  Since $g$ is K\"ahler with respect to $I$, the metric condition \eqref{metricdecay} with  $\delta = \sqrt{\lambda_1} - \epsilon$  then follows from \eqref{Idecay} and the above estimates on $\Phi^* \omega_1 - \omega_1^{\fM}$.

Finally, from \cite[Proposition 3.4(b)]{HSVZ}, we have that 
\begin{align}
| \nabla^k_{g_{\fM}} (  \Phi^*\Omega_X - \Omega_{\fM})|_{g_{\fM}} = O( e^{-\epsilon' z^2})
\end{align}
for some $\epsilon' > 0$ and any $k \geq 0$ as $z \to \infty$.
Because $\Omega_X = \omega_2 + \i \omega_3$ and $\Omega_{\fM} =  \omega_2^{\fM} + \i \omega_3^{\fM}$, the conditions \eqref{omegadecay} for $i = 2$ and $i = 3$ actually hold for any  $\delta > 0$.
\end{proof}

\begin{remark}
Proposition~\ref{t:ALHstarorder} could also be proved by following the arguments in \cite[Section~6.5]{SZ21}. This way of reasoning is analytically more involved but it does not require Theorem~\ref{t:main1}. 
\end{remark}

We now produce a $J$-holomorphic function on $X$ asymptotic to a nice $J_{\fM}$-holomorphic function on $\fM$ that defines an elliptic fibration of $\fM$ with the desired behavior at infinity.

\begin{proposition} 
\label{p:holo}
Let $(X,g,I,J,K)$ be an $\ALH^*$ gravitational instanton. 
Then there exists a function $u: X \rightarrow \CC$ 
which is holomorphic with respect to $J$ and satisfies
\begin{align}
\label{uasymp}
u =  e^{\sqrt{\lambda_1}(z + iy)} + f
\end{align}
where $ |\nabla^k f|_g = O( e^{\epsilon z})$ for all $k \geq 0$,
as $z \to \infty$, for any $\epsilon >0$.  
\end{proposition}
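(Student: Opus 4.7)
The plan is to correct a model $J_{\fM}$-holomorphic function on the end of $X$ into a true $J$-holomorphic function on all of $X$. Set $u_0 := e^{\sqrt{\lambda_1}(z+iy)}$ on the Calabi end. This function descends to $\Nil^3_{b,\tau,c}$ because $\sqrt{\lambda_1}\cdot c\tau_2 = 2\pi$, and it is $J_{\fM}$-holomorphic because $J_{\fM}^*(dz+idy) = -dy+idz = i(dz+idy)$, making $dz+idy$ a $(1,0)$-form for $J_{\fM}$ and $du_0 = \sqrt{\lambda_1}\,u_0(dz+idy)$ purely of type $(1,0)$. I would choose a cutoff $\chi$ on $X$ equal to $1$ for $z \geq R+1$ and vanishing for $z \leq R$, and set $\widetilde u_0 := \chi\cdot(u_0\circ\Phi^{-1})$, extended by zero to all of $X$.

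Next I would estimate $\alpha := \bp_J\widetilde u_0$. On the compact transition region $\{R \leq z \leq R+1\}$ it is smooth and compactly supported. On $\{z \geq R+1\}$ we have $\alpha = \bp_J u_0 - \bp_{J_{\fM}}u_0 = \tfrac{i}{2}(J^* - J_{\fM}^*)du_0$, so combining Proposition~\ref{t:ALHstarorder} (which provides $|J-J_{\fM}|_g = O(e^{-(\sqrt{\lambda_1}-\epsilon')z})$ for any $\epsilon' > 0$) with $|du_0|_g = O(e^{\sqrt{\lambda_1}z})$ yields $|\nabla^k\alpha|_g = O(e^{\epsilon z})$ for all $k \geq 0$ and any $\epsilon > 0$; note also that $\alpha$ is automatically $\bp_J$-closed since $\bp_J^2 = 0$. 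Granted $\bp_J$-solvability with a solution $v$ satisfying $|\nabla^k v|_g = O(e^{\epsilon z})$, the function $u := \widetilde u_0 - v$ will be $J$-holomorphic with $u = e^{\sqrt{\lambda_1}(z+iy)} + f$ and $f = -v$; the higher-derivative bounds on $f$ then follow from local elliptic regularity applied to $\bp_J u = 0$.

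The heart of the argument is this $\bp_J$-solvability with subexponential growth. For it I would invoke the harmonic analysis on asymptotically Calabi ends developed in \cite[Section 4]{HSVZ}: decomposing both sides of $\bp_J v = \alpha$ in Fourier modes along the cross-section $\Nil^3_{b,\tau,c}$ (indexed by Heisenberg representations and, within each representation, by the torus Laplacian spectrum) reduces the equation to a family of linear ODEs in $z$ with indicial roots $\pm\sqrt{\mu}$, one per cross-section eigenvalue $\mu$. All nontrivial eigenvalues satisfy $\mu \geq \lambda_1$, so for each mode one may select a particular solution growing strictly slower than any $e^{\epsilon z}$; the compactly supported piece of $\alpha$ is then absorbed by the standard $\bp_J$-theory on the complete K\"ahler manifold $(X,g,J)$, in the spirit of Proposition~\ref{p:L2 estimate}. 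The hard part is exactly this precise growth control: a direct H\"ormander $L^2$-estimate with quadratic weight $e^{-cz^2}$ would only yield $|v| = O(e^{cz^2/2})$, which is far too weak, and the sharp $O(e^{\epsilon z})$ bound relies on $\sqrt{\lambda_1}$ being precisely the leading exponent of $u_0$ together with the spectral gap between $0$ and $\lambda_1$.
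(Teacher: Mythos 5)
Your setup coincides with the paper's: the model function $e^{\sqrt{\lambda_1}(z+iy)}$, the cutoff, and the estimate $|\bp_J(\chi h)|_g=O(e^{\epsilon z})$ coming from Proposition~\ref{t:ALHstarorder} are all exactly as in the actual proof. But the step you yourself flag as ``the heart of the argument'' --- solving $\bp_J v=\alpha$ on all of $X$ with a solution of subexponential growth $O(e^{\epsilon z})$ --- is a genuine gap, not a routine citation. The harmonic analysis of \cite[Section~4]{HSVZ} is developed for the \emph{scalar Laplacian of the model (asymptotically Calabi) metric}; it does not provide a mode-by-mode solvability theory for the $\bp$-operator of the \emph{perturbed} complex structure $J$ acting on $(0,1)$-forms, and no indicial-root analysis for that operator exists in the cited references. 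Since $J-J_{\fM}$ decays only like $e^{(-\sqrt{\lambda_1}+\epsilon)z}$ while $\alpha$ grows like $e^{\epsilon z}$, turning your sketch into a proof would require a parametrix/iteration or weighted Fredholm argument for $\bp_J$ with sharp weights --- precisely the kind of analysis the paper is structured to avoid --- and the Hörmander-type estimate of Proposition~\ref{p:L2 estimate}, as you note, only yields $O(e^{cz^2/2})$ bounds, which are useless here. So the proposal as written asserts the crucial solvability rather than proving it.

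The paper's proof takes a different route that sidesteps $\bp$-solvability with growth control altogether. It first corrects $\chi h$ to a globally $g$-\emph{harmonic} function $u=\chi h+f$ by solving a scalar Poisson equation: \cite[Proposition~4.12]{HSVZ} handles the model end, the mean-value obstruction is removed by adding a multiple of $\Delta_g(\chi z)$ (whose integral is positive by a divergence-theorem computation), \cite[Theorem~1.5]{Hein2} produces a bounded finite-energy solution, and the integration argument of \cite[Proposition~2.9(ia)]{Hein} upgrades this to exponential decay to a constant. Then, because $g$ is K\"ahler with respect to $J$ and $u$ is harmonic, the form $\alpha=\bp_J u$ satisfies $\bp_J\alpha=0$ and $\bp_{J,g}^*\alpha=0$, and it grows slower than every exponential; the Liouville-type theorem \cite[Theorem~5.1]{HSVZ} for half-harmonic forms then forces $\alpha\equiv 0$, so $u$ is $J$-holomorphic. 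In other words, holomorphicity is obtained from harmonicity plus a vanishing theorem, which is exactly the device you are missing. To repair your argument you would either have to develop the weighted $\bp_J$-theory you invoke, or switch to the paper's harmonic-correction strategy.
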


\begin{proof} 
The function $ z + \i y$ is a locally defined $J_{\fM}$-holomorphic function on $\fM$ because 
\begin{align}
J_{\fM} (dz + i dy) = -dy + i dz = i(dz + i dy).
\end{align}
Consequently, $h = e^{\sqrt{\lambda_1}(z + iy)}$ is a globally defined $J_{\fM}$-holomorphic function on $\fM$. Identify $X$ and $\mathfrak{M}$ at infinity using the $\ALH ^*$ coordinate system of Proposition \ref{t:ALHstarorder}. Fix a cutoff function $\chi$ such that $\chi = 1$ for $z > 4 R$ and $\chi = 0$ for $z < 2R$. Using the fact that $\Delta_{g_{\fM}}h = 0$ and Proposition \ref{t:ALHstarorder}, we have that
\begin{align}
\label{deltachih}
\Delta_g (\chi h) = O( e^{\epsilon z})\;\,\text{as}\;\,z \to \infty,\;\text{for all}\;\,\epsilon >0.
\end{align}

\noindent \emph{Claim}:~There exists a function $f \in C^\infty(X)$ such that $\Delta_g f = - \Delta_g (\chi h)$ and
\begin{align}
\label{festimate}
f = O(e^{\epsilon z})\;\,\text{as}\;\,z \to \infty,\;\text{for all}\;\,\epsilon >0.
\end{align}

\noindent \emph{Proof of Claim}:~Using \cite[Proposition 4.12]{HSVZ}, we can find a smooth function $f_{\fM}$ defined on $\fM$ such that $f_{\fM} = O(e^{\epsilon z})$ as $z \to \infty$ for any $\epsilon > 0$ and such that $\Delta_{g_{\fM}}f_{\fM} = -\Delta_g h$. Thus, aiming to set $f = \chi f_{\fM} + f'$, we can reduce to solving the equation $\Delta_g f' = h'$ for $f' = O(e^{\epsilon z})$ (for any $\epsilon > 0$), where
\begin{align}
h' \equiv - \Delta_g(\chi h) - \Delta_g(\chi f_{\fM}) = O(e^{(-\sqrt{\lambda_1}+\epsilon)z})
\end{align}
for any $\epsilon > 0$ thanks to Proposition \ref{t:ALHstarorder}. In fact, we will now show that for any $\delta>0$ there exists a $\underline{\delta}>0$ such that the equation $\Delta_g f' = h'$ with $h' = O(e^{-\delta z})$ is solvable with $f' = \alpha z + O(e^{-\underline{\delta}z})$ for some $\alpha \in \mathbb{R}$. This implies what we want. To prove this new claim, we first observe that the function $z$ is $g_{\fM}$-harmonic. Thus, by Proposition \ref{t:ALHstarorder}, $\Delta_g(\chi z) = O(e^{(-\sqrt{\lambda_1}+\epsilon)z})$ for any $\epsilon > 0$, so in particular $\Delta_g(\chi z) \in L^1(X,d{\rm Vol}_g)$. The divergence theorem now allows us to conclude that $\int_X \Delta_g(\chi z)\,d{\rm Vol}_g > 0$. Indeed, the corresponding boundary integrals in $(X,g)$ and in $(\fM,g_{\fM})$ are asymptotic to each other because $z$ grows more slowly than any exponential in $z$, and the boundary integral in $(\fM, g_{\fM})$ approaches a positive constant as $z \to \infty$ by direct computation. Thus, by replacing $h'$ by $h' -\alpha \Delta_g(\chi z)$ for some suitable $\alpha \in \RR$, we can assume without loss of generality that $h'$ has mean value zero over $X$ with respect to $d{\rm Vol}_g$. It now follows from \cite[Theorem 1.5]{Hein2} that the equation $\Delta_g f' = h'$ is solvable for some smooth and uniformly bounded $f'$ with $\int_X |df'|_g^2 \, d{\rm Vol}_g < \infty$. It then also follows from the integration argument in the proof of \cite[Proposition 2.9(ia)]{Hein} that any such $f'$ satisfies the estimate 
$|\nabla^k ( f' - C)|_g = O(e^{- \underline{\delta} z})$ for some constant $C$ and for any $k \geq 0$ as $z \to \infty$.   
After subtracting this constant, the (new) claim follows.\hfill$\Box$\medskip\

Since the function $u \equiv \chi h + f$ is harmonic, 
the $1$-form $\alpha \equiv \overline{\partial}_{J} u$ satisfies
\begin{align} 
\label{halfharm}
\overline{\partial}_{J} \alpha = 0,  \ \overline{\partial}_{J,g}^* \alpha = 0.
\end{align}
Since $g$ is K\"ahler with respect to $J$, from Proposition \ref{t:ALHstarorder}, we have that
\begin{align}
\label{Jdecay}
|J - J_{\fM}|_{g_{\fM}} = O(e^{(- \sqrt{\lambda_1} + \epsilon)z})
\end{align}
as $z \to \infty$. Using the fact that $h = O(e^{\sqrt{\lambda_1}z})$ and ${\overline\partial}_{J_{\fM}} h = 0$, we can deduce from \eqref{Jdecay} that ${\overline\partial}_J h = O(e^{\epsilon z})$ for any $\epsilon>0$ with respect to $g_{\fM}$. By \eqref{festimate} and standard local gradient estimates, the same bound holds for ${\overline\partial}_J f$. Thus, it holds for $\alpha$ as well. Since $\alpha$ has $J$-type $(0,1)$, ${\rm Re}(\alpha)$ is half-harmonic with respect to $g$ thanks to \eqref{halfharm}. By \cite[Theorem~5.1]{HSVZ}, we conclude that $\alpha \equiv 0$, which implies that $u$ is $J$-holomorphic.
\end{proof}
We now complete the proof of part (ii) of Theorem~\ref{t:main2}.
Given our work so far, the proof is similar to the proof in \cite[Section~4.7]{CCI}. Let $u$ be the holomorphic function from Proposition \ref{p:holo}. From \eqref{uasymp}, it follows that all fibers of $u$ near infinity are regular and are diffeomorphic to tori, hence all the regular fibers of $u$ are diffeomorphic to tori, 
and so $u: X \rightarrow \CC$ is an elliptic fibration. The $\ALH^*$ coordinates define a submanifold near infinity by 
\begin{align}
\Sigma_0 = \{(x,y,t,z) : x = 0, t = 0\}.
\end{align}
This is clearly a smooth section of the model elliptic fibration over $U \equiv \CC \setminus B_R(0)$ for some $R \gg 1$. It is moreover 
$J_{\fM}$-holomorphic because for all $p \in \Sigma_0$, 
\begin{align}
T_p \Sigma_0 = \mathrm{span}_p \Big\{\frac{\p}{\p y}, \frac{\p}{\p z} \Big\}
=  \{ Y \in T_{p} \fM  : e_1(Y) = e_4(Y) = 0 \},
\end{align}
and $\mathrm{span}\{ e_1, e_4 \}$ is invariant under $J_{\fM}$ by \eqref{Jstar}.

We view the section $\Sigma_0$ as a mapping $\sigma_0 : U \rightarrow X \setminus X_R$. We next want to perturb $\sigma_0$ to a section $\sigma$ which is holomorphic with respect to $J$. Given any smooth (not necessarily holomorphic) section $\sigma$ over $U$, we can define $\op \sigma \in \Gamma( \Lambda^{0,1}(U) \otimes \sigma^* T^{1,0}(F))$,  where $T^{1,0}(F)$ is the $(1,0)$ part of the vertical tangent bundle, by restricting the differential $\sigma_* \otimes \CC$ to $T^{0,1}(U)$, and then projecting to the $(1,0)$ part. 
Next, we use the $2$-form $\Omega = \omega_1 + \i \omega_3$, which is holomorphic with respect to $J$. If we insert the $T^{1,0}(F)$ component of $\op \sigma$ into $\Omega$, we can define $\Omega \odot \op \sigma \in C^{\infty}(U, \CC)$, since $\Lambda^{1,0}(U) \otimes \Lambda^{0,1}(U) \cong \Lambda^{1,1}(U)$ is a trivial bundle. 
Denoting $h(u, \bar{u}) = \Omega \odot \op \sigma_0$, from basic theory of the $\overline{\partial}$-operator in $U$, we can solve the equation $\frac{\partial}{\partial \bar{u}}H = h$ on $U$. 
Choose an arbitrary point $p \in U$ and an affine holomorphic fiber coordinate $w$ over a small neighborhood $U_p$ of $p$ in $U$. Then $\{w = 0\}$ is a local holomorphic section over $U_p$, and the holomorphic $2$-form can be written as
$\Omega = f(u) du \wedge dw$, where $f(u)$ is nowhere vanishing. It is easy to see that the smooth local sections
\begin{align}
\sigma_{h,p} \equiv \left( \frac{H(u,\bar{u})}{f(u)} \right) \frac{\p}{\p w}
\end{align}
over $U_p$ patch up to a well-defined smooth section $\sigma_h$ over $U$, independent of the choice of local $w$ coordinate. Consequently, the section $\sigma \equiv \sigma_0 - \sigma_h$ is a holomorphic section defined over all of $U$. 

After identifying $U$ with a punctured disc $\Delta^*$ using $z = u^{-1}$ as a holomorphic coordinate, we can then identify the elliptic surface with $(\Delta^*_z \times \CC_w) / (\ZZ \oplus \ZZ )$, with the action given by 
\begin{align}
\label{quot}
(m,n) \cdot (z,w) = (z, w + m t_1(z) + n t_2(z)), \ (m,n) \in \ZZ \oplus \ZZ
\end{align}
($t_1, t_2$ are the periods), such that $\{w = 0\}$ defines $\sigma$; 
see for example \cite[pp.369--370]{Hein}. 
Consequently, there exists a compactification of $X$ to an elliptic surface $S$ such that $X = S \setminus F$, where $F$ is the fiber at infinity. Since the cross-sections are diffeomorphic to nilmanifolds of degree $b$, the only possibility is that the monodromy is of type $\I_b$. It is easy to see that the form $\Omega$ is then a meromorphic $2$-form on $S$ with a pole of order $1$ along $F$, which implies that $F$ has multiplicity~$1$. Since $\mathrm{div}(\Omega) = -F$, we have that $K_S = - [F]$. 
From Corollary~\ref{c:main1}, $b^1(X) = 0$. A Mayer-Vietoris argument similar to that in the proof of Corollary~\ref{c:main1} above shows that $b^1(S) = 0$. Arguing exactly as in \cite[Theorem~3.3]{CCIII}, we see that $S$ is a rational elliptic surface, with projection $u : S \rightarrow \PP^1$, so is the blow-up of $\PP^2$ in $9$ points. Consequently, there exists a $(-1)$-curve $E$ (the exceptional divisor of the last blow-up). The adjunction formula then implies that $K_S \cdot E = - 1$, so the condition that $K_S = - [F]$ implies that there are no multiple fibers, and $E$ is a global section; see \cite[Proposition 4.1]{HarbourneLang}.

\bibliographystyle{amsplain} 
\bibliography{HSVZ2}

\end{document}